\numberwithin{equation}{section}
\numberwithin{figure}{section}
\def\expo_#1{{\rm e}^{#1}}
\def\e{\varepsilon}
\def\R{{\mathbb R}}
\def\C{{\mathbb C}}
\def\D{{\mathbb D}}
\def\T{{\mathbb T}}
\def\s{\vskip 0.25cm\noindent}
\def\build#1_#2^#3{\mathrel{\mathop{\kern 0pt#1}\limits_{#2}^{#3}}}
\def\td_#1,#2{\mathrel{\mathop{\build\longrightarrow_{#1\rightarrow #2}^{}}}}
\def\e{\varepsilon}
\newcommand{\ben}{\begin{equation}}
\newcommand{\een}{\end{equation}}
\newcommand{\beno}{\begin{eqnarray*}}
\newcommand{\eeno}{\end{eqnarray*}}
\newtheorem{theorem}{Theorem}
\newtheorem{corollary}{Corollary}
\newtheorem{proposition}{Proposition}
\newtheorem{lemma}{Lemma}
\newtheorem{remark}{Remark}
\newtheorem{example}{Example}
\date{November 10, 2019}
\begin{document}
\title[On a damped Szeg\H{o} equation]{On a damped Szeg\H{o} equation \\(with an appendix in collaboration with Christian Klein)}
\author[P. G\'erard]{Patrick G\'erard}
\address{ Laboratoire de Math\'ematiques d'Orsay, Universit\'e Paris-Sud, CNRS, Universit\'e Paris-Saclay, 91405 Orsay, France}
\author[S. Grellier]{Sandrine Grellier}
\address{Institut Denis Poisson, D\'epartement de Math\'ematiques, Universit\'e d'Orleans, 45067
Orl\'eans Cedex 2, France}
\subjclass{35B15, 47B35, 37K15}
\begin{abstract}
We investigate how  damping  the lowest Fourier mode modifies the dynamics of the cubic Szeg\H{o} equation. We show that there is a nonempty open subset of initial data generating  trajectories with high Sobolev norms tending to infinity. In addition, we give a complete picture of this phenomenon on a reduced phase space of dimension $6$. An appendix is devoted to numerical simulations supporting the generalisation of  this picture to more general initial data.
\end{abstract}
\keywords{Cubic Szeg\H{o} equation, integrable system, damped equations,  Hankel operator, spectral analysis}

\thanks{The authors are grateful to T. Alazard for drawing their attention to the introduction of damping in integrable PDEs.}
\maketitle
\tableofcontents
\section{Introduction}
In the last decade, a number of papers have tried to display  growth of Sobolev norms of high regularity for solutions of globally wellposed nonlinear Hamiltonian partial differential equations. This question, raised by Bourgain in \cite{Bo1}, \cite{Bo2} for the defocusing nonlinear Schr\"odinger equation on the torus, led to several contributions constructing solutions with a small initial Sobolev norm of high regularity and a big Sobolev norm at some later time, see \cite{CKSTT}, \cite{GGens}, \cite{GGAnPDE}, \cite{Gu}, \cite{GuK}, \cite{HP}, \cite{GHP}, \cite{GHHMP}, \cite{GLPR}. The actual existence of unbounded trajectories was proved in \cite{Po}, \cite{H},  \cite{HPTV},  \cite{GGAst},  \cite{X}, \cite{X2}, \cite{CG}, \cite{T}, \cite{Gfields}. In this paper, we intend to study a case  where a weak damping can promote unbounded trajectories in Sobolev spaces with high regularity. This unexpected phenomenon will be displayed in the particular case of a weak damping applied to the cubic Szeg\H{o} equation. It would be interesting to investigate how weak damping can perturb other Hamiltonian dynamics in the same way.\s
The cubic Szeg\H{o} equation was introduced in \cite{GGens} as a toy model of  degenerate nondispersive Hamiltonian dynamics. A natural phase space is the intersection of the Sobolev space $H^{\frac 12}(\T )$ with the Hardy space $L^2_+(\T )$ made of square integrable functions on the circle with only nonnegative Fourier modes. This phase space will be denoted by $H^{\frac 12}_+(\T )$. The equation reads
\begin{equation}\label{szego}
i\partial_tu=\Pi (|u|^2u)\ ,
\end{equation}
where $\Pi $ is the orthogonal projector from $L^2(\T )$ onto $L^2_+(\T )$.  An important property of  equation \eqref{szego} is 
the existence of a Lax pair structure, leading to action--angle variables \cite{GGinv}. On the other hand, the conservation laws do not control high Sobolev regularity of the solutions,
allowing for long term infinitely many transitions  between low and high frequencies, as proved in \cite{GGAst} -- see also the lecture notes \cite{Gfields}. However, such solutions are very unstable. The goal of this paper is to investigate how these properties are changed when adding a damping term, which breaks both the Hamiltonian and the integrable structures. Such a problem for integrable systems seems very difficult. In order to make it more amenable, we choose a specific damping term which keeps part of the above structure. This leads to the following equation.
\begin{equation}\label{DS}
i\partial_tu+i\alpha(u\vert 1)=\Pi(|u|^2u)\ ,
\end{equation}
where $\alpha>0$ is a given parameter, which could be made equal to $1$ after a scaling transformation, and $(u\vert 1)$ is the Fourier coefficient $\hat u(0)$ of $u$.  Note that the momentum 
$$M(u):=(Du\vert u)=\sum_{k\ge 1}k|\hat u(k)|^2$$ is preserved by the flow. An easy modification of the arguments in \cite{GGens} shows that \eqref{DS} is globally wellposed 
on $H^s_+:=L^2_+\cap H^s$ for every $s\ge \frac 12$. Our goal is to study the behaviour of solutions of \eqref{DS} as $t\to +\infty $, in particular the growth of Sobolev norms $H^s$ for $s>\frac 12$. Recall from \cite{GGAst} that, for a dense $G_\delta $ subset of initial data in $L^2_+\cap C^\infty $, 
the solutions of \eqref{szego} satisfy, for every $s>\frac 12$,
$$\limsup_{t\to +\infty} \Vert u(t)\Vert _{H^s}=+\infty \ ,\ \liminf_{t\to +\infty} \Vert u(t)\Vert _{H^s}<+\infty\ .$$
Furthermore, this subset has an empty interior, since it does not contain any trigonometric polynomial. It turns out that the introduction of the damping term drastically modifies this asymptotic behaviour. Indeed, our main result is the following.
\begin{theorem}\label{main}
There exists an open subset $\Omega $ of $H^{\frac 12}_+$ such that, for every $s>\frac 12$, $\Omega \cap H^s_+$ is not empty and every solution $u$ of \eqref{DS} with $u(0)\in \Omega \cap H^s_+$ satisfies
$$\Vert u(t)\Vert _{H^s}\td_t,{+\infty}+\infty \ .$$
\end{theorem}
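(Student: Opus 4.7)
The plan is to reduce the problem to a finite-dimensional invariant submanifold, exhibit blow-up there, and then promote this by a stability argument to an open set of $H^{\frac12}_+$. The natural reduced phase space, matching the dimension $6$ announced in the abstract, is the manifold $\mathcal{V}(1)$ of rational functions $u(z)=\frac{a+bz}{1-pz}$ with $(a,b,p)\in\C\times\C\times\D$. Since the undamped Szeg\H{o} flow preserves rational degrees (cf.\ \cite{GGens}) and the damping term $-\alpha\hat u(0)$ in $\partial_t u$ is a constant perturbation, the flow of \eqref{DS} also preserves $\mathcal{V}(1)$, yielding an autonomous system of three complex ODEs in $(a,b,p)$. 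A direct computation gives $\hat u(0)=a$ and $\hat u(k)=(ap+b)p^{k-1}$ for $k\ge 1$, whence the conserved momentum reads $M(u)=\frac{|ap+b|^2}{(1-|p|^2)^2}$ and $\|u\|_{H^s}^2\asymp |a|^2+M(u)(1-|p|^2)^{1-2s}$ as $|p|\to 1$. In particular, on $\mathcal{V}(1)\cap\{M(u)>0\}$ the conclusion of the theorem is equivalent to $|p(t)|\to 1$.

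The second step is to analyse this three-complex-dimensional system and exhibit an open subset $\widetilde\Omega\subset\mathcal{V}(1)$ on which $|p(t)|\to 1$. I would exploit the phase-rotation symmetry $u\mapsto \mathrm{e}^{i\theta}u$, the conservation of $M(u)$, and the energy identity $\frac{d}{dt}\|u\|_{L^2}^2=-2\alpha|a|^2$ to reduce the qualitative analysis to a small number of real variables. Since the damping monotonically drains $L^2$-mass, a natural blow-up scenario is that $\|u(t)\|_{L^2}^2\to 0$ along a large family of trajectories; combined with the identity $\|u\|_{L^2}^2-|a|^2=M(u)(1-|p|^2)$ and the conservation of $M(u)>0$, this forces $|p(t)|\to 1$. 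The concrete task is to exhibit an attracting equilibrium, limit cycle, or more generally an attracting set at the boundary stratum $\{|p|=1\}$, whose basin is open in $\mathcal{V}(1)$.

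The main obstacle is the final step: promoting $\widetilde\Omega\subset\mathcal{V}(1)$ to an open set $\Omega\subset H^{\frac12}_+$. Continuous dependence in $H^{\frac12}$ handles any bounded time interval, but the blow-up is an asymptotic statement and must persist under $H^{\frac12}$-small non-rational perturbations for all time. A successful strategy would either (i) exploit an attracting or centre-stable-manifold property of $\widetilde\Omega$ in the full damped flow, showing that nearby solutions are asymptotic to $\mathcal{V}(1)$-trajectories, or (ii) establish a uniform, quantitative blow-up rate on $\widetilde\Omega$ and bootstrap an $H^{\frac12}$-small perturbation against the nonlinear correction, crucially using the conservation of $M(u)$ to control the momentum of the perturbation. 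Either route requires linearised spectral information about the reduced flow near the blow-up set, together with a careful argument that damping only the zero Fourier mode is enough to tame the full, $L^2$-preserving Hamiltonian dynamics off $\mathcal{V}(1)$.
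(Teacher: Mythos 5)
Your strategy is genuinely different from the paper's, and it has a real gap precisely at the step you flag as the main obstacle. The paper does not prove Theorem~\ref{main} by first establishing blow-up on a finite-dimensional invariant manifold and then promoting it to an open set by stability. Instead, it works directly in infinite dimensions: the Lyapunov identity $\frac{d}{dt}\|u\|_{L^2}^2+2\alpha|\hat u(0)|^2=0$ combined with LaSalle's principle shows that any weak $H^{1/2}$ limit point $u_\infty$ of $S_\alpha(t)u_0$ generates an \emph{undamped} Szeg\H{o} trajectory with $(v(t)|1)\equiv 0$; the crucial second ingredient is that the Lax pair for the shifted Hankel operator $K_u$ survives the damping, so the spectrum of $K_u^2$ is conserved; and the spectral characterization of solutions of \eqref{szego} with vanishing mean (Theorem~\ref{u(0)=0}) forces $\|u_\infty\|_{L^2}^2=F(u_0):=\sum_k(-1)^{k-1}\sigma_k^2$, where $\sigma_k^2$ are the eigenvalues of $K_{u_0}^2$. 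The monotonicity of the $L^2$-norm then yields, by contradiction, that $\|u_0\|_{L^2}^2<F(u_0)$ guarantees blow-up, and this condition is open wherever $F$ is continuous, which the paper checks near a rank-one example. No perturbation of a finite-dimensional dynamical picture is required.

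The concrete problems with your plan are the following. First, your argument that $\mathcal V(1)$ is invariant (``the damping is a constant perturbation'') is not adequate as stated; what is true and what the paper uses is that the $K_u$ Lax identity is unaffected by adding a constant to the symbol, so that $\operatorname{rank} K_{u(t)}$ is conserved, and $\mathcal V(1)=\mathcal W$ is exactly $\{\operatorname{rank}K_u=1\}$. Second, and more seriously, your proposed promotion strategy~(i), based on an attracting or centre-stable manifold for $\mathcal V(1)$, cannot work: conservation of the $K_u^2$-spectrum means that any initial datum with $\operatorname{rank}K_{u_0}\ge 2$ has a trajectory that remains bounded away from $\mathcal V(1)$ for all time, so $\mathcal V(1)$ is never attracting in $H^{1/2}_+$. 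Your strategy~(ii), a quantitative bootstrap in $H^{1/2}$ against the Hamiltonian dynamics off $\mathcal V(1)$, would essentially require re-deriving the spectral invariants that the paper uses, since the $H^{1/2}$-norm is conserved up to the damped mode and provides no small parameter; nothing in your outline gives a mechanism controlling the off-$\mathcal V(1)$ component globally in time. As written, then, the proposal reduces Theorem~\ref{main} to a stability statement that is at least as hard as the theorem itself, and in part (the centre-stable route) is actually false. The missing idea is the persistence of the $K_u$ Lax pair under damping and the resulting functional $F$, which converts the asymptotic LaSalle information into an explicit, open, sufficient condition on initial data.
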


In fact, we obtain an explicit sufficient condition on initial data which drives to an exploding orbit in $H^s_+$ (see Theorem \ref{Exploding} below). 
\s
Theorem \ref{main} calls for  a number of natural questions.
\begin{enumerate}
\item Is the open set $\Omega \cap H^s_+$ dense in $H^s_+$ ? 
\item What is the rate of the growth of $\Vert u(t)\Vert _{H^s}$ ?
\end{enumerate}
At this stage, we do not have a complete answer to these questions. Nevertheless, the evolution of \eqref{DS} admits an invariant finite dimensional submanifold on which a complete description of the dynamics is available, providing a precise answer to questions (1), (2) in this particular setting. We denote by $\mathcal W$ the subset of functions $u$ on $\T $ of the form
$$u(x)=b+\frac{c\, {\rm e}^{ix}}{1-p{\rm e}^{ix}}\ ,$$
where $b, c, p\in \C $, $c\ne 0$, $|p|<1$. Note that $\mathcal W$ is a closed submanifold of dimension $6$ in $H^{\frac 12}_+$.  One can prove that, if $u$ is a solution of \eqref{DS} with $u(0)\in \mathcal W$, then $u(t)\in \mathcal W$ for every $t\in \R$
(see section 2 below).  Given $M>0$, we define the following hypersurface of $\mathcal W$
$$\mathcal E_M=\{u\in\mathcal W;\;M(u)=M\}.$$
We also denote by $\mathcal C_M$ the circle made of functions of the form
$$u(z)=cz\ ,\ |c|^2=M\ ,$$
which is a closed orbit for \eqref{DS}. 
\begin{theorem}\label{W}
For every $M>0$ and every $\alpha >0$, there exists a codimension $2$ submanifold  $\Sigma_{M,\alpha }$  of $\mathcal E_M$, disjoint from $\mathcal C_M$, invariant by the evolution of \eqref{DS}, such that
\begin{itemize}
\item If $u$ solves \eqref{DS} with $u(0)\in \mathcal E_M\setminus (\mathcal C_M\cup \Sigma_{M,\alpha })$, then, for every $s>\frac 12$, as $t\to +\infty $,
$$\| u(t)\| _{H^s}\sim c(s,\alpha ,M)\, t^{s-\frac 12}$$
with $c(s,\alpha ,M)>0$.
\item  If $u(0)\in \Sigma_{M,\alpha }$, then $u(t)$ tends to $\mathcal C_M$ at $t\to +\infty$, and 
$${\rm dist}(u(t), \mathcal C_M)\simeq {\rm e}^{-\lambda (\alpha ,M)t}\ ,$$
with $\lambda (\alpha ,M)>0\ .$
\end{itemize}
\end{theorem}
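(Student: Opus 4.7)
The plan is to reduce the PDE \eqref{DS} restricted to the six-dimensional manifold $\mathcal{W}$ to a system of ODEs, and then to analyze the resulting finite-dimensional phase portrait. First I would substitute the ansatz $u(x) = b + ce^{ix}/(1-pe^{ix})$ into \eqref{DS}. Since $\mathcal{W}$ is known to be invariant (Section 2), $\Pi(|u|^{2}u)$ is of the same rational form, and matching the three independent coefficients of $1,\ \frac{e^{ix}}{1-pe^{ix}},\ \frac{e^{2ix}}{(1-pe^{ix})^{2}}$ yields a closed system of three complex ODEs for $(b,c,p)$. Momentum conservation then reads explicitly $|c|^{2} = M(1-|p|^{2})^{2}$, confining the dynamics to the five-dimensional hypersurface $\mathcal{E}_{M}$.

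Next I would exploit the two $U(1)$-symmetries of \eqref{DS}: the global phase $u \mapsto e^{i\theta}u$, which acts on coordinates as $(b,c,p)\mapsto(e^{i\theta}b, e^{i\theta}c, p)$, and the spatial translation $u(\cdot)\mapsto u(\cdot-\xi)$, which acts as $(b,c,p)\mapsto(b, ce^{-i\xi}, pe^{-i\xi})$. Quotienting by this free $\T^{2}$-action leaves a three-dimensional reduced system, for which natural coordinates are $\rho := 1-|p|^{2}$, $\beta := |b|$, and the single relative phase $\varphi := \arg(bp/c)$, the latter being the unique independent angular combination invariant under both actions.

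The core of the proof is the phase-portrait analysis of this three-dimensional reduced system. The orbit $\mathcal{C}_{M}$ descends to a single equilibrium corresponding to $\rho = 1$, $\beta = 0$; I would linearize the reduced vector field there and show that its spectrum has exactly one eigenvalue with negative real part, the other eigenvalues being unstable or neutral, so that the stable manifold is a one-dimensional curve in the reduced picture. Lifting this curve by the $\T^{2}$-action produces the announced codimension-two invariant submanifold $\Sigma_{M,\alpha}\subset\mathcal{E}_{M}$, and the exponential convergence rate $\lambda(\alpha,M)$ equals the modulus of the negative real part.

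For the generic case $u(0)\in\mathcal{E}_{M}\setminus(\mathcal{C}_{M}\cup\Sigma_{M,\alpha})$, the goal is to prove $\rho(t)\to 0$ with sharp rate $\rho(t)\sim K(\alpha,M)^{-1}/t$. I would construct a Lyapunov-type functional on the reduced phase space---plausibly a combination of the restriction of the Szeg\H{o} Hamiltonian $\tfrac{1}{4}\int|u|^{4}$ with a quantity monitoring the damping---that rules out periodic or interior $\omega$-limit behavior and produces a reduced scalar equation of the form $\dot\rho = -K(\alpha,M)\rho^{2} + o(\rho^{2})$. Integrating this gives $\rho(t)\sim K(\alpha,M)^{-1}t^{-1}$; combining with $|c(t)|^{2} = M\rho(t)^{2}$ and the Laplace-type asymptotics
$$\|u(t)\|_{H^{s}}^{2} \simeq |c(t)|^{2}\sum_{k\ge 1}k^{2s}|p(t)|^{2(k-1)} \sim M\,\Gamma(2s+1)\,\rho(t)^{1-2s}$$
as $\rho\to 0$ then yields $\|u(t)\|_{H^{s}}\sim c(s,\alpha,M)\,t^{s-1/2}$ with $c(s,\alpha,M) = \sqrt{M\,\Gamma(2s+1)}\,K(\alpha,M)^{s-1/2} > 0$. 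The main obstacle is this last step: the reduced system is genuinely non-Hamiltonian once the damping is switched on, so the explicit integrability tools available for the undamped Szeg\H{o} equation do not apply directly, and identifying the correct monotone quantity that simultaneously excludes alternative $\omega$-limit behavior and exposes the sharp constant $K(\alpha,M)$ in the slow-variable equation is the delicate technical task.
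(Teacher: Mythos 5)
Your overall architecture is the same as the paper's: substitute the ansatz into \eqref{DS} to get a closed ODE system for $(b,c,p)$, restrict to $\mathcal E_M$, quotient by the $\T^2$ symmetries to reach a three-dimensional reduced system, interpret $\mathcal C_M$ as the fixed circle, build $\Sigma_{M,\alpha}$ as its stable set, and in the generic case derive $1-|p(t)|^2\sim\kappa/t$ and convert this to the $H^s$ growth via the Gamma-function asymptotics (your formula $\|u\|_{H^s}^2\sim M\,\Gamma(2s+1)\,\rho^{1-2s}$ is exactly what the paper uses). However, there are two genuine gaps in the two steps you flag as "core."

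First, the stable-manifold step. Your coordinates $(\rho,\beta,\varphi)$ with $\varphi=\arg(bp/c)$ are singular precisely at the equilibrium $\mathcal C_M$ (where $b=p=0$ and $\varphi$ is undefined), so "linearize the reduced vector field there" is not a well-posed operation. The paper works instead with the $\T^2$-invariant quantities $(\beta,\delta,\zeta)=(|b|^2,M|p|^2,Mc\overline{bp})$ and the linear matrix $A$ in $\dot X+AX=Q(X)$, whose eigenvalues are $\alpha\pm a$ and $\alpha\pm i\sqrt{a^2-\alpha^2}$ with $a>\alpha$: in dynamical terms this is \emph{one} unstable direction and \emph{three} stable directions, not the single stable eigenvalue you assert. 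The reduced stable manifold is nonetheless one-dimensional, but the reason is the positivity constraints $\beta\ge 0$, $\delta\ge 0$ together with the exact algebraic relation $|\zeta|^2=(M-\delta)^2\beta\delta$: the two oscillatory stable modes with rate $\alpha$ would force $\beta,\delta$ to change sign and are therefore excluded, so that admissible converging solutions must decay at the strictly faster rate $a+\alpha$. The paper does not invoke a stable-manifold theorem at a singular point; it proves the necessary asymptotic ratio $\delta(t)/\beta(t)\to(a-\alpha)/(a+\alpha)$ by linearizing the original flow near the periodic orbit and exploiting the sign condition $\|u(t)\|_{L^2}^2\ge M$ (Lemma 3 and the "baby example" Proposition 5), and then constructs $\Sigma_{M,\alpha}$ by a fixed-point/scattering argument for the system $X(t)={\rm e}^{-tA}X_\infty-\int_t^\infty{\rm e}^{(s-t)A}Q(X(s))\,ds$ (Lemma 4). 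Your proposal would need this extra mechanism; as written the dimension count and the linearization are both off.

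Second, the generic decay rate. You say you would "construct a Lyapunov-type functional... that produces a reduced scalar equation of the form $\dot\rho=-K\rho^2+o(\rho^2)$," and you candidly note this is the delicate part — but nothing in the proposal gives a candidate for such a functional, and the paper does not use one. Instead the paper writes the $\zeta$-equation as $\frac{\dot\zeta}{\alpha+iM}+\zeta=f+r$ with $r\in L^1$, uses $\dot\gamma=-2\,{\rm Im}\,\zeta$, integrates imaginary parts twice, and shows $\gamma(t)=\frac{1}{\kappa}\bigl(\int_t^\infty\gamma^2\bigr)(1+o(1))$, which is then solved as an ODE for $\Gamma(t)=\int_t^\infty\gamma^2$. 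This is a concrete asymptotic-integration argument, not a Lyapunov argument; reproducing the sharp constant $\kappa=(\alpha^2+M^2)/(2\alpha M)$ from an unspecified monotone quantity is not obviously achievable, and you would at least need to reconstruct the precise structure of the $\zeta$-equation. A smaller gap: the dichotomy you implicitly assume (either $\rho\to 0$ or the trajectory converges to $\mathcal C_M$, with nothing in between) is not automatic; the paper obtains it from the LaSalle-type Proposition 2 combined with the observation that a limiting Szeg\H{o} solution in $\mathcal W$ orthogonal to $1$ must have $p=0$, and then from the alternative in Proposition 4. Your proposal should state how these intermediate $\omega$-limit behaviours are ruled out before the rate analysis starts.
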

Let us say a few words about the ingredients of the proofs of Theorems \ref{main} and \ref{W}. The first important feature of the damped Szeg\H{o} equation \eqref{DS} is that the $L^2$ norm is a Lyapunov functional,
$$\frac{d}{dt}\Vert u(t)\Vert _{L^2}^2+2\alpha |(u(t)\vert 1)|^2=0\ .$$
Using  LaSalle's invariance principle associated to this identity, one infers that limit points of $u(t)$ as $t\to +\infty $ in the weak $H^{\frac 12}$ topology are initial data of solutions $v$ of  \eqref{szego} satisfying $(v(t)\vert 1)=0$ for every $t\in \R $.\\
The second important argument relies on the Lax pair structure for the Szeg\H{o} equation \eqref{szego}, which is given by
$$\frac{dH_u}{dt}=[B_u,H_u]\ ,\ \frac{dK_u}{dt}=[C_u,K_u]\ ,$$
where $H_u, K_u$ are Hankel operators associated to $u$, and $B_u, C_u$ are antiselfadjoint operators. It turns out that, though the identity for $H_u$ does not hold 
anymore for solutions of the damped Szeg\H{o} equation \eqref{DS}, identity for $K_u$ remains valid for \eqref{DS}. As a consequence, the spectrum of the positive trace class operator $K_u^2$ is conserved by the dynamics of \eqref{DS}. \\
The connection between the two above arguments is made thanks to a characterization of initial data of solutions $v$ of  \eqref{szego} satisfying $(v(t)\vert 1)=0$ for every $t\in \R $, in terms of the spectral theory 
of $H_v$ and $K_v$. If $u$ is a solution whose $H^s$ norm does not tend to $+\infty$, this allows to calculate  the limit of the $L^2$ norm of  $u(t)$  in terms of the spectrum of $K_u^2$, leading to Theorem \ref{main}.\\
As for Theorem \ref{W}, the Lax pair identity for $K_u$ implies that the dynamics of \eqref{DS} preserves functions $u$ such that operator $K_u$ has a given finite rank. Manifold $\mathcal W$ precisely corresponds to operators $K_u$ of rank $1$. A careful study of the ODE system defined by \eqref{DS} on $\mathcal W$ then leads to Theorem \ref{W}.
\s
This paper is organised as follows. Section 2 is devoted to recalling important facts about the Szeg\H{o} equation and Hankel operators, and to establishing general properties of solutions of the damped Szeg\H{o} equation. Theorem \ref{main} is proved in Section 3,
and Theorem \ref{W} is proved in Section 4.
\s
Let us mention that some introductory material to the Szeg\H{o} equation can be found in \cite{Gfields}, and in \cite{GGchine}, where the results of this paper were announced.
\section{Generalities on the damped and undamped Szeg\H{o} equations} 
In the following, we denote by $t\mapsto S(t)u_0$ (respectively  by  $t\mapsto S_\alpha(t)u_0$) the solution of the Szeg\H{o} equation \eqref{szego} (respectively of the damped Szeg\H{o} equation \eqref{DS}) with initial datum $u_0$.

\subsection{The Lyapunov functional}
As emphasized in the introduction, an important tool in the study of the damped Szeg\H{o} equation (\ref{DS}) is the existence of a Lyapunov functional. Precisely, the following lemma holds.
\begin{lemma}\label{limit}
Let $u_0\in H^{1/2}_+(\T)$. Then, for any $t\in \R$,
\begin{equation}\label{Lyapunov}\frac d{dt}\Vert S_\alpha(t) u_0\Vert^2_{L^2}+2\alpha |(S_\alpha u_0(t)\vert 1)|^2=0.
\end{equation} As a consequence,
 $t\mapsto\Vert S_\alpha(t)u_0\Vert_{L^2}$ is decreasing, and $|(S_\alpha(t)u_0\vert 1)|$ is square integrable on $[0,+\infty)$, tending to zero as $t$ goes to $+\infty $.
\end{lemma}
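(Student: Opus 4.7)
The plan is to pair \eqref{DS} with $u$ in $L^2$ to produce \eqref{Lyapunov}, and then to read off the three conclusions in turn.

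Rewriting \eqref{DS} as $\partial_t u = -\alpha (u\vert 1) - i\Pi(|u|^2 u)$ and computing $\tfrac12\frac{d}{dt}\|u\|_{L^2}^2 = \mathrm{Re}(\partial_t u\vert u)$, the cubic contribution $-\mathrm{Re}\,i\bigl(\Pi(|u|^2 u)\vert u\bigr)$ vanishes since $u\in L^2_+$ forces $(\Pi(|u|^2 u)\vert u)=(|u|^2 u\vert u)=\int_\T|u|^4\in\R$, while the damping contribution gives $-\alpha\,\mathrm{Re}\bigl((u\vert 1)(1\vert u)\bigr)=-\alpha|(u\vert 1)|^2$. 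This is exactly \eqref{Lyapunov}. Since the right-hand side is nonpositive, $t\mapsto\|S_\alpha(t)u_0\|_{L^2}$ is decreasing; integrating \eqref{Lyapunov} on $[0,T]$ and letting $T\to+\infty$ yields
$$2\alpha\int_0^{+\infty}|(S_\alpha(t)u_0\vert 1)|^2\,dt\le \|u_0\|_{L^2}^2,$$
which is the $L^2$-integrability claim.

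The only step requiring care is pointwise decay, since an $L^2$ function on the half-line need not tend to zero. I would set $\varphi(t):=(S_\alpha(t)u_0\vert 1)$ and show that $\varphi'$ is bounded on $[0,+\infty)$; combined with $\varphi\in L^2(\R_+)$, the elementary fact that an $L^2$ function with bounded derivative tends to zero at infinity (otherwise $|\varphi|\ge\varepsilon/2$ on disjoint intervals of fixed positive length, contradicting integrability) then finishes the proof. Differentiating $\varphi$ using \eqref{DS} gives
$$\varphi'(t)=-\alpha\varphi(t)-i\bigl(|S_\alpha(t)u_0|^2 S_\alpha(t)u_0\,\big\vert\,1\bigr),$$
so one needs a uniform bound on $\|S_\alpha(t)u_0\|_{L^3}^3$. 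This is the main obstacle; I would resolve it by invoking the conservation of $M$ under the flow stated just after \eqref{DS}, which together with the already-established $L^2$ bound yields $\sup_{t\ge 0}\|S_\alpha(t)u_0\|_{H^{1/2}}<+\infty$, so that the Sobolev embedding $H^{1/4}(\T)\hookrightarrow L^4(\T)$ controls the cubic term uniformly in $t$.
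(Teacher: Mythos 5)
Your proof is correct and follows essentially the same route as the paper: the same pairing with $u$ in $L^2$ to obtain the Lyapunov identity, the same integration to get $L^2$-integrability of $(S_\alpha(t)u_0\vert 1)$, and the same strategy of bounding the time derivative of the zeroth mode via the conserved momentum and $L^2$ decay to get a uniform $H^{1/2}$ bound, hence control of the cubic term. The paper works with $\tfrac{d}{dt}|(u\vert 1)|^2$ and uses $H^{1/2}\hookrightarrow L^4$ directly rather than $\varphi'$ and $H^{1/4}\hookrightarrow L^4$, but these are inessential variations on the same argument.
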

\begin{proof}
Denote by $u(t):=S_\alpha(t)u_0$ the solution of \eqref{DS} with $u(0)=u_0$. Observe first that $t\mapsto\Vert u(t)\Vert_{L^2}$ decreases:
\begin{eqnarray*}
\frac d{dt}\Vert u(t)\Vert^2_{L^2}&=& 2{\rm Re}(\partial_t u\vert u)=2{\rm Im} (i\partial_t u\vert u)\\
&=&2{\rm Im} (\Pi(|u|^2u)\vert u)-2\alpha {\rm Im} (i((u\vert 1)\vert u))\\
&=&-2\alpha |(u(t)\vert 1)|^2
\end{eqnarray*}
Hence, $t\mapsto\Vert u(t)\Vert^2_{L^2}$ admits a limit at infinity and since
$$\Vert u(t)\Vert^2_{L^2}=\Vert u_0\Vert^2_{L^2}-2\alpha\int_0^t|(u(s)\vert 1)|^2 ds$$  we deduce the finiteness of 
$$\int_0^\infty|(u(s)\vert 1)|^2 ds.$$
On the other hand, we claim that $$\frac d{dt}  |(u(t)\vert 1)|^2$$ is bounded. Indeed
\begin{eqnarray*}
\frac d{dt}  |(u(t)\vert 1)|^2&=& 2{\rm Re} ((\partial_t u\vert 1)(1\vert u))\\
&=& 2{\rm Im}(-i\alpha (u\vert 1)(1\vert u))+2{\rm Im}((\Pi(|u|^2u\vert 1)(1\vert u)))\\
&=&-2\alpha |(u\vert 1)|^2+2{\rm Im}((u^2\vert u)(1\vert u)).
\end{eqnarray*}
but $$|(u(t)\vert 1)|\le \Vert u\Vert_{L^2}$$
and 
$$|(u^2(t)\vert u(t))|\le \Vert u\Vert_{L^2}\times \Vert u\Vert_{L^4}^2\le \Vert u\Vert_{L^2}\times\Vert u\Vert_{H^{1/2}}^2\le  \Vert u_0\Vert_{L^2}(M+\Vert u_0\Vert_{L^2}^2).$$
From both observations, we conclude that $|(u(t)\vert 1)|$ tends to zero as $t$ goes to infinity.
\end{proof}

From Lemma \ref{limit} and the conservation of the momentum, the $H^{1/2}$ norm of $S_\alpha(t)u_0$ remains bounded as $t\to +\infty $, hence one can consider limit points $u_\infty$ of $S_\alpha (t)u_0$ for the weak topology of $H^{1/2}$ as $t\to +\infty $.
Another general lemma describes more precisely these limit points, according to  LaSalle's invariance principle.
\begin{proposition}\label{WeakLimit}
Let $u_0\in H^{1/2}(\T)$. Any $H^{1/2}$- weak limit point $u_\infty$ of $(S_\alpha(t) u_0)$ 
as $t\to +\infty$  satisfies $(S_\alpha(t)u_\infty\vert 1)=0$ for all $t$. In particular, $S_\alpha(t)u_\infty$ solves the cubic Szeg\H{o} equation -- in other words $S_\alpha(t)u_\infty=S(t)u_\infty$.
\end{proposition}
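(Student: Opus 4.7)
The plan is to apply LaSalle's invariance principle directly on a trajectory. Fix a sequence $t_n\to+\infty$ such that $v_n:=S_\alpha(t_n)u_0\rightharpoonup u_\infty$ weakly in $H^{1/2}$; existence of such a sequence is guaranteed because $(v_n)$ is bounded in $H^{1/2}_+$ by Lemma \ref{limit} (decay of $\Vert\cdot\Vert_{L^2}$) together with the conservation of $M$. The aim is to prove $(S_\alpha(\tau)u_\infty\vert 1)\equiv 0$ in $\tau$; granted this, the damping term in \eqref{DS} vanishes identically along that orbit, and $H^{1/2}$-uniqueness gives $S_\alpha(\tau)u_\infty=S(\tau)u_\infty$.

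The key preliminary step is weak--strong continuity of the flow on bounded sets of $H^{1/2}_+$. Setting $w_n(\tau):=S_\alpha(\tau)v_n$, the sequence $(w_n)$ is uniformly bounded in $C_{\rm loc}(\R;H^{1/2}_+)$, and \eqref{DS} yields a uniform bound on $\partial_\tau w_n$ in $L^\infty_{\rm loc}(\R;H^{-1/2})$. Combining Aubin--Lions with the compactness of the embedding $H^{1/2}(\T)\hookrightarrow L^p(\T)$ for every $p<\infty$, a diagonal extraction produces a subsequence with $w_n\to w$ strongly in $C_{\rm loc}(\R;L^p)$ for every $p<\infty$. The cubic term $\Pi(|w_n|^2w_n)$ therefore passes to the limit in $L^1_{\rm loc}(\R;L^2)$, so that $w$ solves \eqref{DS} with $w(0)=u_\infty$. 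By uniqueness, $w=S_\alpha(\cdot)u_\infty$, and since the limit does not depend on the subsequence, the whole sequence satisfies $w_n(\tau)\rightharpoonup S_\alpha(\tau)u_\infty$ in $H^{1/2}$ for every $\tau$.

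Now fix $T>0$. Dominated convergence applied to the bounded pointwise convergence $(w_n(\tau)\vert 1)\to(S_\alpha(\tau)u_\infty\vert 1)$ on $[0,T]$ gives
\[
\int_0^T|(S_\alpha(\tau)u_\infty\vert 1)|^2\,d\tau=\lim_{n\to\infty}\int_0^T|(w_n(\tau)\vert 1)|^2\,d\tau=\lim_{n\to\infty}\int_{t_n}^{t_n+T}|(S_\alpha(s)u_0\vert 1)|^2\,ds=0,
\]
where the last equality follows from the $L^1([0,+\infty))$-integrability of $s\mapsto|(S_\alpha(s)u_0\vert 1)|^2$ established in Lemma \ref{limit}. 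Since $\tau\mapsto(S_\alpha(\tau)u_\infty\vert 1)$ is continuous and $T>0$ is arbitrary, $(S_\alpha(\tau)u_\infty\vert 1)=0$ for every $\tau\ge 0$; backward well-posedness of \eqref{DS} extends the identity to $\tau<0$, and the conclusion then follows from the observation in the first paragraph.

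The main obstacle is the weak--strong continuity statement: only weak convergence of the initial data is available, but the nonlinearity is cubic, so one must use the compact Sobolev embedding on $\T$ to upgrade weak $H^{1/2}$ convergence of the orbits to strong $L^p$ convergence before passing to the limit in $\Pi(|u|^2u)$. Everything else is a direct consequence of the Lyapunov identity \eqref{Lyapunov}.
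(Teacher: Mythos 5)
Your proof is correct and sits in the same LaSalle framework as the paper's: extract a sequence, propagate the weak limit through the flow, and use the Lyapunov identity to kill the damping along the limiting trajectory. The one difference worth noting is in the final step. The paper uses the monotone limit $Q$ of $\|S_\alpha(t)u_0\|_{L^2}^2$: by Rellich, the $L^2$ norm of $w_n(\tau)$ converges to $\|S_\alpha(\tau)u_\infty\|_{L^2}^2$, which therefore equals $Q$ for every $\tau$, and differentiating via \eqref{Lyapunov} forces the inner product to vanish. You instead exploit the square-integrability of $(S_\alpha(\cdot)u_0\vert 1)$ on $[0,\infty)$ directly: the translated tail integrals go to zero, and dominated convergence (using only weak $L^2$ convergence of $w_n(\tau)$, so Rellich is not needed here) identifies the limit with $\int_0^T|(S_\alpha(\tau)u_\infty\vert 1)|^2\,d\tau$. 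Both are legitimate; the paper's is a touch slicker, yours is marginally more robust in that it never uses strong $L^2$ convergence of the orbit at the final stage. You also spell out the weak--strong sequential continuity of the flow (Aubin--Lions, passage to the limit in the cubic term), which the paper takes for granted with the one-line remark "by the weak continuity of the flow"; your elaboration is essentially what that remark is secretly invoking.

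One phrase to tighten: "backward well-posedness of \eqref{DS} extends the identity to $\tau<0$" is not quite right as stated -- backward well-posedness makes $S_\alpha(\tau)u_\infty$ meaningful for $\tau<0$ but does not by itself propagate the identity $(S_\alpha(\tau)u_\infty\vert1)=0$. The fix is already contained in your own argument: for $T<0$ and $n$ large, $w_n(\tau)=S_\alpha(\tau+t_n)u_0$ is still defined for $\tau\in[T,0]$, and the same dominated-convergence computation gives $\int_T^0|(S_\alpha(\tau)u_\infty\vert 1)|^2\,d\tau=0$. Replacing the appeal to backward well-posedness by this observation (or, as the paper does, letting $t$ range over all of $\R$ from the start) closes the loop.
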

\begin{proof}
Denote by $Q$ the limit of the decreasing non-negative function $t\mapsto\Vert S_\alpha(t)u_0\Vert_{L^2}^2$. By the weak continuity of the flow in $H^{1/2}_+(\T)$,
$$u(t+t_n)=S_\alpha(t)u(t_n)\to S_\alpha(t)u_\infty$$ weakly in $H^{1/2}$ as $n\to \infty$. Hence, thanks to the Rellich theorem, $$\Vert u(t+t_n)\Vert_{L^2}^2\to \Vert S_\alpha(t)u_\infty\Vert ^2_{L^2}$$ as $n$ tends to infinity. On the other hand, by Lemma \ref{limit},  $$\Vert u(t+t_n)\Vert_{L^2}^2\to Q$$ so eventually, for every $t\in \R$,  $$\Vert S_\alpha(t)u_\infty\Vert ^2_{L^2}=\Vert u_\infty\Vert ^2_{L^2},$$ or $$\frac d{dt}\Vert S_\alpha(t)u_\infty\Vert^2_{L^2}=0.$$ 
Recall that, from \eqref{Lyapunov},
$$\frac{d}{dt}\Vert S_\alpha(t)u_\infty\Vert_{L^2}^2=-2\alpha|(S_\alpha(t)u_\infty\vert 1)|^2.$$
It forces $(S_\alpha(t)u_\infty\vert 1)=0$ for all $t$. Hence $S_\alpha(t)u_\infty=S(t)u_\infty$ is a solution to the Szeg\H{o} equation without damping. 
\end{proof}
In order to characterize $u_\infty$, we need to recall some results about the cubic Szeg\H{o} equation.
\subsection{Hankel operators and the Lax pair structure}
In this paragraph, we recall some basic facts about Hankel operators and the special structure of the cubic Szeg\H{o} equation \eqref{szego}. We keep the notation of \cite{GGAst} and we refer to it for  details. For $u\in H^{\frac 12}_+$, we denote by $H_u$ the Hankel operator of symbol $u$ namely
$$H_u:\left\{\begin{array}{cll}
L^2_+(\T)&\to& L^2_+(\T)\\
f&\mapsto& \Pi(u\overline f)
\end{array}\right.$$
It is well known that, for $u$ in $H^{\frac 12}_+$, $H_u$ is Hilbert-Schmidt with $${\rm Tr}(H_u^2)=\sum_{k\ge 0} (k+1)|\hat u(k)|^2=\Vert u\Vert_{L^2}^2+M(u).$$ One can also consider the shifted Hankel operator $K_u$ corresponding to $H_{S^*u}$ where $S^*$ denotes the adjoint of the shift operator $Sf(x):={\rm e}^{ix}f(x)$.
This shifted Hankel operator is Hilbert-Schmidt as well, with  $${\rm Tr}(K_u^2)=\sum_{k\ge 0} k|\hat u(k)|^2=M(u)$$ Observe in particular that $\Vert u\Vert_{L^2}^2={\rm Tr}(H_u^2)-{\rm Tr}(K_u^2)$.

A crucial property of the cubic Szeg\H{o} equation is its Lax pair structure. Namely, if $u$ is a smooth enough solution to \eqref{szego}, then there exists two antiselfadjoint operators $B_u, \, C_u$ such that 
\begin{equation}\label{LaxPair}\frac{d}{dt}H_u=[B_u,H_u],\; \;\frac{d}{dt}K_u=[C_u,K_u].\end{equation}
Classically, these equalities imply that $H_{u(t)}$ and $K_{u(t)}$ are isometrically equivalent to $H_{u(0)}$ and $K_{u(0)}$ (see \cite{GGAst} for instance). In particular, both spectra of $H_u$ and $K_u$ are preserved by the cubic Szeg\H{o} flow. It motivated the study of the spectral properties of both Hankel operators that we recall here.\\
For $u\in H^{\frac 12}_+$, let $(s_j^2)_{j\ge 1}$ be the strictly decreasing sequence of positive eigenvalues of $H_u^2$ and $K_u^2$. Following the terminology of \cite{GGAst}, $\sigma^2$ is an $H$-dominant eigenvalue (respectively $K$-dominant eigenvalue) if $\sigma^2$ is an eigenvalue of $H_u^2$ (respectively of $K_u^2$) with $u\not\perp \ker(H_u^2-\sigma^2I)$ (respectively  with $u\not\perp \ker(K_u^2-\sigma^2I)$). From the min-max formula and the fact that $K_u^2=H_u^2-(\cdot\vert u)u$, it is possible to prove that the $s_{2j-1}^2$ correspond to $H$-dominant eigenvalues of $H_u^2$ while the $s_{2j}^2$ correspond to $K$-dominant eigenvalues of $K_u^2$.  Furthermore,  the eigenvalues of $H_u^2$ and of $K_u^2$ interlace and, as a consequence, if 
$m_j:=\dim \ker (H_u^2-s_j^2I)$ and  $\tilde m_j:=\dim \ker (K_u^2-s_j^2I)$,  then $$m_{2j-1}=\tilde m_{2j-1}+1\; \text{ as }\tilde m_{2j}=m_{2j}+1.$$ \\
To complete the spectral analysis of these Hankel operators, we need to recall the notion of Blaschke product. A function $b$ is a Blaschke product of degree $m$ if $$b(x)={\rm e^{i\varphi}}\prod_{j=1}^m \frac{{\rm e^{ix}}-p_j}{1-\overline {p_j}{\rm e^{ix}}}$$ for some $p_j\in \C$ with $|p_j|<1$, $j=1$ to $m$.
As proved  in \cite{GGAst} --- see also \cite{GP} for a generalisation to non compact Hankel operators---, for any $H$-dominant eigenvalue $s_{2j-1}^2$, there exists a Blaschke product $\Psi_{2j-1}$ of degree $m_{2j-1}-1$ such that, if $u_j$ denotes the orthogonal projection of $u$ on the eigenspace $\ker(H_u^2-s_{2j-1}^2I)$, then $$\Psi_{2j-1}H_u(u_j)=s_{2j-1}u_j.$$
Analogously, for any $K$-dominant eigenvalue $s_{2k}^2$, there exists a Blaschke product $\Psi_{2k}$ of degree $\tilde m_{2k}$ such that, if $\tilde u_k$ denotes the orthogonal projection of $u$ on $\ker(K_u^2-s_{2k}^2I)$ then $$K_u(\tilde u_k)=\Psi_{2k}s_{2k}\tilde u_k.$$

We proved in \cite{GGAst} that the sequence $((s_j^2),(\Psi_j))$ characterizes $u$, and that it provides a system of action-angle variables for the Hamiltonian evolution \eqref{szego}.  Namely, 
if $u(0)$ has spectral coordinates $((s_j^2),(\Psi_j))$ then $u(t)$ has spectral coordinates $$((s_j^2),({\rm e}^{i(-1)^js_j^2t}\Psi_j)).$$ 
We are now in position to characterize the asymptotics of the damped Szeg\H{o} equation. We first remark that the equation inherits one Lax pair, the one related to the shifted Hankel operator $K_u$. It comes easily from the fact the shifted Hankel operator associated to a constant symbol is identically $0$ and so, if $u(t):=S_\alpha(t) u_0$, then
\begin{equation}\label{LaxK}\frac d{dt}K_u=K_{-i\Pi(|u|^2u)}+\alpha K_{(u\vert 1)}=[C_u,K_u]
\end{equation}
where $C_u$ is the antiselfadjoint operator given by \eqref{LaxPair}.
As a usual consequence, $K_{S_\alpha(t) u_0}$ is unitarily equivalent to $K_{u_0}$ and for instance, the class of symbol $u$ with $K_u$ of fixed finite rank is preserved by the damped Szeg\H{o} flow. In particular \begin{equation}\label{SetW}
\mathcal W:=\left \{u(x)=b+\frac {c\, {\rm e}^{ix}}{1-p{\rm e}^{ix}},\; b,c,p\in\C,  c\neq 0,\; |p|<1\right \}
\end{equation}
 is invariant by the flow since it corresponds to the set of symbol whose shifted Hankel operators are of rank $1$.\\
 Another consequence is the following result.
\begin{theorem}\label{u(0)=0}
The solutions $u=u(t,\cdot)$ of the cubic Szeg\H{o} equation satifying $(u(t)\vert 1)=0$ for all $t$ are characterized by the property $(\Psi_{2j-1}\vert 1)=0$, for all the Blaschke products $\Psi_{2j-1}$'s corresponding to the $H$-dominant eigenvalues $s_{2j-1}^2$ of $H_u^2$. In particular, the $H$-dominant eigenvalues are at least of multiplicity $2$ and hence, are eigenvalues of $K_u^2$. Furthermore, if $\{\sigma_k^2\}_k$ denotes the strictly decreasing sequence of the eigenvalues of $K_u^2$, one has
$$\Vert u(t)\Vert_{L^2}^2=\sum_k(-1)^{k-1}\sigma_k^2.$$
\end{theorem}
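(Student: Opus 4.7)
The plan is to derive a single spectral formula for $(u|1)$ in terms of action--angle data and then exploit the angle evolution $\Psi_j(t)=e^{i(-1)^js_j^2t}\Psi_j(0)$. Since $u=H_u(1)$ and $H_u$ antilinearly preserves each eigenspace $\ker(H_u^2-s^2I)$, it commutes with every spectral projector $P_j$ of $H_u^2$; hence $u=\sum_j u_j$ with $u_j=P_ju=H_u(P_j(1))$. Applying $H_u$ again yields $H_u(u_j)=H_u^2(P_j(1))=s_{2j-1}^2P_j(1)$, and combining with the Blaschke relation $\Psi_{2j-1}H_u(u_j)=s_{2j-1}u_j$ together with $|\Psi_{2j-1}|=1$ on $\T$ gives $u_j=s_{2j-1}\Psi_{2j-1}P_j(1)$. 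Since $\Psi_{2j-1},P_j(1)\in L^2_+$, the constant Fourier coefficient of their product factorizes as $(\Psi_{2j-1}P_j(1)|1)=(\Psi_{2j-1}|1)(P_j(1)|1)$, and $(P_j(1)|1)=\|P_j(1)\|^2$ by orthogonality of $P_j$. Summing over the $H$-dominant indices and applying this at time $t$ yields
\begin{equation*}
(u(t)|1)=\sum_j s_{2j-1}(\Psi_{2j-1}(0)|1)\,e^{-is_{2j-1}^2t}\,\|P_j^{(t)}(1)\|^2.
\end{equation*}
The implication ($\Leftarrow$) is then immediate: if every $(\Psi_{2j-1}(0)|1)$ vanishes, every summand vanishes identically in $t$.

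For the direct implication, assume $(u(t)|1)\equiv 0$. The main obstacle is that the weights $\|P_j^{(t)}(1)\|^2$ depend on $t$ through $u(t)$, so one cannot extract modes by a naive Fourier argument in $t$. I would exploit almost-periodicity: $u(t)$ stays bounded in $H^{\frac 12}_+$ (since $\|u\|_{L^2}^2$ and $M(u)$ are conserved) and its orbit sweeps, densely on an appropriate subtorus, the torus of angles $(\phi_k)_k$ of the Blaschke products $\Psi_k$, on which the flow acts by translation with frequencies $(-1)^k s_k^2$. On this torus, $\|P_j(1)\|^2$ is a continuous and strictly positive function (positivity is a phase-independent consequence of $s_{2j-1}^2$ being $H$-dominant), so its mean value $C_j$ is strictly positive. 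Extracting the Bohr--Fourier coefficient of $(u(t)|1)$ at the pure frequency $-s_{2j-1}^2$ then yields $s_{2j-1}C_j(\Psi_{2j-1}(0)|1)$, which must vanish, forcing $(\Psi_{2j-1}(0)|1)=0$. Possible resonances among the $s_k^2$'s are handled via the analytic dependence of the reconstruction map on the phase variables $e^{\pm i\phi_k}$, allowing one to isolate the coefficient of the monomial $e^{-i\phi_{2j-1}}$.

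The multiplicity claim is then immediate: a degree-$0$ Blaschke product is a nonzero unimodular constant, so $(\Psi_{2j-1}|1)=0$ forces $\deg\Psi_{2j-1}\ge 1$, i.e.\ $m_{2j-1}\ge 2$; by interlacing, $\tilde m_{2j-1}=m_{2j-1}-1\ge 1$, so $s_{2j-1}^2$ is an eigenvalue of $K_u^2$. Consequently the combined sequence $(s_j^2)$ of eigenvalues of $H_u^2$ and $K_u^2$ coincides with the strictly decreasing eigenvalue sequence $(\sigma_k^2)$ of $K_u^2$. The $L^2$ norm formula finally follows from $\|u\|_{L^2}^2={\rm Tr}(H_u^2)-{\rm Tr}(K_u^2)$ combined with the interlacing relations $m_{2j-1}-\tilde m_{2j-1}=1$ and $\tilde m_{2k}-m_{2k}=1$, which give $m_j-\tilde m_j=(-1)^{j-1}$ and hence $\|u(t)\|_{L^2}^2=\sum_j(-1)^{j-1}s_j^2=\sum_k(-1)^{k-1}\sigma_k^2$.
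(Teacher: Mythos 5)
Your derivation of the mode formula is correct and matches the paper's: from $u_j=H_u(P_j 1)$, $H_u(u_j)=s_{2j-1}^2P_j(1)$, and $\Psi_{2j-1}H_u(u_j)=s_{2j-1}u_j$, one indeed gets $u_j=s_{2j-1}\Psi_{2j-1}P_j(1)$, hence $(u_j\vert 1)=s_{2j-1}(\Psi_{2j-1}\vert 1)\Vert P_j(1)\Vert^2=\frac{(\Psi_{2j-1}\vert 1)}{s_{2j-1}}\Vert u_j\Vert^2$. Your treatment of the multiplicity claim and of the $L^2$-norm identity is also correct and coincides with the paper's.

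The gap is in the direct implication. You present the time-dependence of $\Vert P_j^{(t)}(1)\Vert^2$ as the ``main obstacle,'' but this quantity is in fact \emph{conserved} by the Szeg\H{o} flow: $\Vert u_j\Vert^2=s_{2j-1}^2\Vert P_j(1)\Vert^2$ is a function of the actions $(s_k^2)$ alone (this is part of the spectral/action--angle analysis in \cite{GGAst}; in the finite rank case it is given by an explicit rational expression in the $s_k^2$). Once this is noted, the formula
$$0=(u(t)\vert 1)=\sum_j\frac{{\rm e}^{-is_{2j-1}^2t}}{s_{2j-1}}(\Psi_{2j-1}(0)\vert 1)\,\Vert u_j\Vert_{L^2}^2$$
has \emph{constant} coefficients and distinct frequencies $s_{2j-1}^2$, and the conclusion $(\Psi_{2j-1}(0)\vert 1)=0$ follows immediately from the linear independence of the characters ${\rm e}^{-is_{2j-1}^2t}$ (together with $\Vert u_j\Vert^2>0$, which is exactly $H$-dominance). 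This is the paper's argument. Your alternative route through ergodic averages on the angle torus is both unnecessary and, as written, incomplete: if $\Vert P_j^{(t)}(1)\Vert^2$ really were angle-dependent with nontrivial Bohr modes, a resonance such as $s_{2j-1}^2=s_{2k-1}^2\pm(\text{frequency of }\Vert P_k^{(t)}(1)\Vert^2)$ would contaminate the Bohr coefficient you want to isolate, and the hypothesis $(u(t)\vert 1)\equiv0$ only constrains the function on the closure of the orbit, which is a \emph{proper} subtorus precisely when resonances occur; ``analytic dependence on the phases'' does not let you propagate the vanishing off that subtorus. So the appeal to resonance-handling is a genuine hole in your argument -- one that disappears entirely once you use the conservation of $\Vert u_j\Vert^2$.
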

\begin{proof}
Let us write $u=\sum u_j$ where $u_j$ is the orthogonal projection of $u$ onto the eigenspace $E_u(s_{2j-1}):=\ker(H_u^2-s_{2j-1}^2I)$ associated to the $H$-dominant eigenvalue $s_{2j-1}^2$. By the spectral analysis of the Hankel operator recalled above, there exists a Blaschke product $\Psi_{2j-1}$ of degree $m-1$ where $m$ is the dimension of $E_u(s_{2j-1})$ with  $s_{2j-1} u_j=\Psi_{2j-1} H_u(u_j)$. The evolution of the Blaschke product is given by 
$$\Psi_{2j-1}(t)={\rm e}^{-is_{2j-1}^2 t}\Psi_{2j-1}(0).$$
Computing $(u(t)\vert 1)$, we get, for all $t\in \R$,
$$0=(u(t)\vert 1)=\sum(u_j(t)\vert 1)=\sum_j\frac{{\rm e}^{-is_{2j-1}^2 t}}{s_{2j-1}}(\Psi_{2j-1}(0)\vert 1)\Vert u_j\Vert_{L^2}^2.$$ It implies that $(\Psi_{2j-1}(0)\vert 1)=0$ so that the degree of $\Psi_{2j-1}$ is at least $1$ and hence, the multiplicity of $s_{2j-1}^2$ is at least $2$.
From the interlacement property, this eigenvalue is also an eigenvalue for $K_u^2$. Let $\{\sigma_k^2\}_k$ denote the strictly decreasing sequence of the eigenvalues of $K_u^2$. We denote by $m_k$ the multiplicity of  $\sigma_k^2$ as an eigenvalue of $H_u^2$ and by $\tilde m_k$ its multiplicity as a eigenvalue of $K_u^2$. From the interlacement property, if $k$ is odd, $m_k=\tilde m_k+1$ and if $k$ is even, $m_k=\tilde m_k-1$. We now compute the $L^2$ norm of $u(t)$:
$$\Vert u(t)\Vert_{L^2}^2={\rm Tr }H_u^2-{\rm Tr}K_u^2=\sum m_k\sigma_k^2-\sum \tilde m_k \sigma_k^2=\sum_k(-1)^{k-1}\sigma_k^2.$$

\end{proof}

\section{Exploding trajectories}

In this section, we consider trajectories of \eqref{DS} in $H^s$, $s>\frac 12$, along which the $H^s$ norm of $u(t)$ tends to infinity as $t\to +\infty$. 
Let us define the functional $$F(u)=\sum_k(-1)^{k-1}\sigma_k^2$$ where $(\sigma_k^2)_k$ is the strictly decreasing sequence of positive eigenvalues of $K_{u}^2$. We prove the following result.
\begin{theorem}\label{Exploding}
Let $s>\frac 12$. If $u_0\in H^s_+$ satisfies
\begin{itemize}
\item either $ \Vert u_0\Vert_{L^2}^2 <F(u_0) $,
\item or $ \Vert u_0\Vert_{L^2}^2 =F(u_0)$ and $(u_0\vert 1)\ne 0$,
\end{itemize}
then the  $H^s$--norm of the solution of the damped Szeg\H{o} equation $$\Vert u(t)\Vert_{H^s}=\Vert S_\alpha(t) u_0\Vert_{H^s}$$ tends to $+\infty$ as $t$ tends to  $+\infty$.
\end{theorem}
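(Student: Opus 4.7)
The plan is to argue by contradiction. Suppose $\|u(t)\|_{H^s}$ does not tend to $+\infty$; then there exists a sequence $t_n\to+\infty$ along which $\|u(t_n)\|_{H^s}$ stays bounded. Since $s>\frac 12$, the embedding $H^s_+\hookrightarrow H^{\frac 12}_+$ is compact, so, after extraction, $u(t_n)$ converges \emph{strongly} in $H^{\frac 12}_+$ to some $u_\infty\in H^{\frac 12}_+$. The objective is then to derive a contradiction from the single identity $Q=F(u_0)$, where $Q:=\lim_{t\to+\infty}\|u(t)\|_{L^2}^2$.

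I would obtain this identity by combining three ingredients. First, the Lyapunov identity of Lemma \ref{limit} gives a monotone limit $Q\le\|u_0\|_{L^2}^2$, and strong $L^2$ convergence along the subsequence forces $Q=\|u_\infty\|_{L^2}^2$. Second, Proposition \ref{WeakLimit} asserts that $v(t):=S(t)u_\infty$ solves the undamped Szeg\H{o} equation with $(v(t)\vert 1)=0$ for every $t$, so Theorem \ref{u(0)=0} yields $\|u_\infty\|_{L^2}^2=F(u_\infty)$. Third, the Lax pair identity \eqref{LaxK} shows that $K_{u(t)}$ is unitarily equivalent to $K_{u_0}$ for every $t$; to transfer this conservation to $u_\infty$, I would use the Hankel matrix representation $(K_u e_k\vert e_j)=\hat u(j+k+1)$, which yields the elementary identity
$$\|K_u-K_{u'}\|_{\rm HS}^2=M(u-u').$$
Since $M$ is controlled by the $H^{\frac 12}$ norm, strong convergence $u(t_n)\to u_\infty$ in $H^{\frac 12}_+$ forces $K_{u(t_n)}\to K_{u_\infty}$ in Hilbert--Schmidt norm, hence the full spectra (with multiplicities) of $K_{u_\infty}^2$ and $K_{u_0}^2$ coincide. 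In particular $F(u_\infty)=F(u_0)$, and therefore $Q=F(u_0)$.

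To conclude, I would split the two hypotheses. If $\|u_0\|_{L^2}^2<F(u_0)$, the chain $Q\le\|u_0\|_{L^2}^2<F(u_0)=Q$ is an immediate contradiction. If instead $\|u_0\|_{L^2}^2=F(u_0)$ and $(u_0\vert 1)\ne 0$, then $Q=\|u_0\|_{L^2}^2$, so the monotonicity of $t\mapsto\|u(t)\|_{L^2}^2$ forces this function to be identically equal to $\|u_0\|_{L^2}^2$ on $[0,+\infty)$; integrating \eqref{Lyapunov} then gives $(u(t)\vert 1)=0$ for every $t\ge 0$, contradicting $(u_0\vert 1)\ne 0$.

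The delicate step is the spectral continuity: weak $H^{\frac 12}$ convergence alone would not suffice to pass the spectrum of $K_{u(t_n)}^2$ to the limit. This is precisely where the hypothesis $s>\frac 12$ enters, through the compact embedding into $H^{\frac 12}$, combined with the fortunate identification $\|K_u-K_{u'}\|_{\rm HS}^2=M(u-u')$.
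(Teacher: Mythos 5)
Your proof is correct and follows the same overall architecture as the paper's: argue by contradiction, extract a subsequence converging strongly in $H^{\frac 12}_+$ by Rellich, identify the weak limit via Proposition \ref{WeakLimit} as the initial datum of a Szeg\H{o} solution with vanishing zero mode, invoke Theorem \ref{u(0)=0} to evaluate $\|u_\infty\|_{L^2}^2$ in terms of the spectrum of $K_{u_\infty}^2$, and close using the monotonicity of $t\mapsto\|u(t)\|_{L^2}$ and the Lyapunov identity. Where you genuinely deviate is the step transferring the spectral data of $K_{u_0}^2$ to $K_{u_\infty}^2$. The paper argues in two stages: first that every eigenvalue of $K_{u_\infty}^2$ lies in $\Sigma(u_0)$ with a multiplicity not exceeding the original one, and then that conservation and $H^{\frac 12}$-continuity of the momentum $M(u)={\rm Tr}(K_u^2)$, together with $M(u_\infty)=M(u_0)$, force these multiplicities to saturate, hence $\Sigma(u_\infty)=\Sigma(u_0)$ with multiplicities. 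You replace this two-stage squeeze by the single observation that $u\mapsto K_u$ is linear with $\|K_u-K_{u'}\|_{\rm HS}^2 = {\rm Tr}(K_{u-u'}^2)=M(u-u')\leq\|u-u'\|_{H^{1/2}}^2$, so strong $H^{\frac 12}$ convergence gives Hilbert--Schmidt convergence of $K_{u(t_n)}$, hence (by Weyl's inequality for singular values) convergence of the full singular-value list with multiplicities; since that list is constant in $n$ by the Lax pair, it coincides with that of $K_{u_0}$, giving $F(u_\infty)=F(u_0)$ directly. This is more direct than the paper's trace argument and makes transparent why $s>\frac 12$ is exactly the threshold: it is what grants compactness into $H^{\frac 12}$, which is precisely the regularity controlling $K_u$ in Hilbert--Schmidt norm. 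Both routes are correct; yours trades the momentum-conservation bookkeeping for an explicit norm identity.
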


\begin{proof}
Let us proceed by contradiction and assume that there exists a sequence $t_n\to +\infty $ such that $u(t_n):=S_\alpha(t_n)u_0$ is bounded in $H^s$. 
We may assume that $u(t_n)$ is weakly convergent to some $u_\infty $ in $H^s_+$. By the Rellich theorem, the convergence is strong in $H^{\frac 12}_+$,
and 
\begin{equation}\label{MM}
M(u_\infty )=M(u_0)=\sum_{\sigma^2 \in \Sigma (u_0)}m(\sigma )\sigma ^2\ ,
\end{equation}
where $\Sigma(u_0)$ denotes the set of eigenvalues of $K_{u_0}^2$ and $m(\sigma )$ the multiplicity of $\sigma ^2\in \Sigma(u_0)$. By the Lax pair structure, the eigenvalues of $K_{u(t_n)}^2$ are the same as the eigenvalues of $K_{u_0}^2$, with the same multiplicities, hence every eigenvalue $\sigma^2 $ of $K_{u_\infty}^2$ must belong to $\Sigma (u_0)$, with a multiplicity not bigger than $m(\sigma )$. In view of  identity  \eqref{MM}, we infer that
$$\Sigma (u_\infty )=\Sigma (u_0)\ ,$$
with the same multiplicities. On the other hand, from Proposition \ref{WeakLimit}, we know that $u_\infty$ generates a solution of the cubic Szeg\H{o} equation which is orthogonal to $1$ at every time. Consequently, Theorem \ref{u(0)=0} gives
$$\| u_\infty \|_{L^2}^2=F(u_0)\ .$$
Since the $L^2$-norm of the solution is decreasing by Lemma \ref{limit}, $ \| u_0\|_{L^2}^2\ge \| u_\infty \|_{L^2}^2\ .$  Hence, $ \| u_0\|_{L^2}^2\ge F(u_0)$.
 If $ \| u_0\|_{L^2}^2=F(u_0)$ then $\Vert S_\alpha(t)u_0\Vert _{L^2}$ remains constant and necessarily, by the Lyapunov functional identity \eqref{Lyapunov}, $(S_\alpha (t)u_0\vert 1)=0$ so that in particular, $(u_0\vert 1)=0$. Hence, the case $ \| u_0\|_{L^2}^2=F(u_0)$ and $(u_0\vert 1)\neq 0$ drives to an exploding orbit in $H^s$ as well as the case $ \| u_0\|_{L^2}< F(u_0)$. It ends the proof of Theorem \ref{Exploding}. 
\end{proof}

\subsection{The case of Blaschke products}
As a particular case of initial datum satisfying $ \Vert u_0\Vert_{L^2}^2 =F(u_0)$ and $(u_0\vert 1)\ne 0$, we consider initial datum given by a Blaschke product.

\begin{corollary}
Let $\Psi$ be a Blaschke product with $(\Psi\vert 1)\neq 0$ then $\Vert S_\alpha(t)\Psi\Vert_{H^s}$ tends to $+\infty$ with $t$ for any $s>\frac 12$.
\end{corollary}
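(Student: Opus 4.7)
The plan is to apply Theorem \ref{Exploding} in its second alternative to $u_0=\Psi$: since $|\Psi({\rm e}^{ix})|=1$ on $\T$ we have $\Vert\Psi\Vert_{L^2}^2=1$, the datum $\Psi$ is smooth so lies in every $H^s_+$, and $(\Psi\vert 1)\neq 0$ is the hypothesis; the whole content of the proof is therefore the identity $F(\Psi)=1$. I would prove this by showing that $K_\Psi^2$ has $1$ as its \emph{only} positive eigenvalue, with multiplicity equal to the degree $N$ of $\Psi$.

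For the upper bound $\sigma_k^2\le 1$, Nehari gives $\Vert H_\Psi\Vert_{{\rm op}}\le\Vert\Psi\Vert_{L^\infty}=1$, and the identity $K_u^2=H_u^2-(\cdot\vert u)u$ recalled in the paper yields $K_\Psi^2\le H_\Psi^2\le I$ as positive operators. To see that $1$ is attained with multiplicity at least $N$, consider the Beurling model space $\mathcal M_\Psi:=L^2_+\ominus\Psi L^2_+$, of dimension $N$. Its classical description $\mathcal M_\Psi=L^2_+\cap\Psi\overline{zL^2_+}$ gives $\Psi\bar f\in zL^2_+$ for each $f\in\mathcal M_\Psi$, so $H_\Psi f=\Pi(\Psi\bar f)=\Psi\bar f$ and hence $H_\Psi^2 f=\Pi(|\Psi|^2 f)=f$. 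Since moreover $f\perp\Psi L^2_+\ni\Psi$, the identity above reduces to $K_\Psi^2 f=f$, so $\mathcal M_\Psi\subseteq\ker(K_\Psi^2-I)$.

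Matching with the trace ${\rm Tr}(K_\Psi^2)=M(\Psi)$ and using the Dirichlet-integral / area-with-multiplicity interpretation $M(\Psi)=\pi^{-1}\int_{\D}|\Psi'|^2\,dA=N$ for a degree-$N$ Blaschke product (which is an $N$-sheeted covering of $\D$ onto itself), the lower bound is saturated: $1$ is the only positive eigenvalue of $K_\Psi^2$, of multiplicity exactly $N$. Hence $F(\Psi)=\sigma_1^2=1=\Vert\Psi\Vert_{L^2}^2$, and Theorem \ref{Exploding} yields $\Vert S_\alpha(t)\Psi\Vert_{H^s}\to+\infty$ for every $s>\tfrac 12$.

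The main step that demands care is $M(\Psi)=N$ for a degree-$N$ Blaschke product. The area-with-multiplicity argument dispatches it cleanly; alternatively one can induct on $N$, writing $\Psi=\Psi_1\cdot\frac{{\rm e}^{ix}-p}{1-\bar p\,{\rm e}^{ix}}$ and tracking how the Fourier tail (and hence the trace of $K^2$) evolves under multiplication by a single Blaschke factor.
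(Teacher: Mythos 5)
Your proof is correct, but it takes a genuinely different route from the paper's. Both arguments reduce the corollary to showing $F(\Psi)=1$, i.e., that $K_\Psi^2$ has $1$ as its only nonzero eigenvalue. The paper proceeds through the spectral machinery of [GGAst]: it observes $H_\Psi^2\Psi=\Psi$, invokes Lemma~3.5.2 of that monograph to identify the corresponding eigenspace of $H_\Psi^2$ as $(N+1)$-dimensional (so that $H_\Psi$ has rank exactly $N+1$), and then deduces via the interlacing property of $H$- and $K$-eigenvalues that $1$ is the only singular value of $K_\Psi$, with multiplicity $N$. You instead give a self-contained functional-analytic argument: Nehari gives $K_\Psi^2\le H_\Psi^2\le I$; the model-space identity $\mathcal M_\Psi=L^2_+\cap\Psi\,\overline{zL^2_+}$ shows directly that $K_\Psi^2$ acts as the identity on the $N$-dimensional space $\mathcal M_\Psi$; and the trace computation $\operatorname{Tr}(K_\Psi^2)=M(\Psi)=\pi^{-1}\int_{\D}|\Psi'|^2\,dA=N$ then saturates the bound, forcing $K_\Psi^2$ to be the orthogonal projection onto $\mathcal M_\Psi$. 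Your route avoids invoking the detailed Schmidt-pair analysis of [GGAst] and the interlacing lemma, replacing them with Beurling/Nehari theory plus the covering-degree interpretation of the Dirichlet integral, which makes the argument more portable; the paper's route is shorter given that the spectral framework has already been built up.
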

\begin{proof}
Observe that, as $\Psi$ is inner, $H_\Psi^2(\Psi)=H_\Psi(1)=\Psi$ so that $1$ is an eigenvalue of $H_\Psi^2$ with eigenvector $\Psi$. From the spectral analysis done in \cite{GGAst}, and in particular from Lemma 3.5.2, one obtains the explicit description of the eigenspace corresponding to $1$. If $\Psi$ is of degree $N$, this eigenspace is of dimension $N+1$ hence $1$ is $H$-dominant, of multiplicity $N+1$ and the representation of $\Psi$ through the non-linear Fourier transform is $\Psi$ itself. In particular, the rank of $H_\Psi$ is $N+1$. On the other hand,  from the interlacing property, $1$ is a singular value of $K_\Psi$ of multiplicity $N$ and the rank of $K_\Psi$ is $N$.  Hence  $K_{\Psi}$ has only $1$ as possible non zero singular value and $F(\Psi)=1$. As $\Vert \Psi\Vert_{L^2}=1$ we get the norm explosion as a corollary of Theorem \ref{Exploding}.
\end{proof}
\subsection{An open condition}
As a second corollary of Theorem  \ref{Exploding}, we get the following result, which implies Theorem \ref{main}.
\begin{corollary} Denote by $\Omega $ the interior in $H^{\frac 12}_+$ of the set of $u_0\in H^{\frac 12}_+$ such that $ \Vert u_0\Vert_{L^2}^2 <F(u_0) $. 
For every $s>\frac 12$,  $\Omega \cap H^s_+$ of $H^{s}_+(\T)$ is not empty, and 
every solution $u$ of  with $u(0)\in\Omega \cap H^s_+$ satisfies
$$\Vert u(t)\Vert_{H^s}\to \infty$$
as $t$ tends to $+\infty $. 
\end{corollary}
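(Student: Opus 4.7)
The explosion conclusion is immediate from Theorem~\ref{Exploding}, since by construction $\Omega\subset\{u_0\in H^{1/2}_+:\Vert u_0\Vert_{L^2}^2<F(u_0)\}$. The only non-trivial point is to exhibit a nonempty open subset of $\{u_0:\Vert u_0\Vert_{L^2}^2<F(u_0)\}$ meeting $H^s_+$ for every $s>\frac12$.

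The plan is to take an element of the rank-one manifold $\mathcal W$ with $b=0$:
$$u_0(x)=\frac{c\,{\rm e}^{ix}}{1-p\,{\rm e}^{ix}},\qquad c,p\in\C\setminus\{0\},\ |p|<1,$$
which is smooth and therefore lies in $H^s_+$ for every $s$. A direct Fourier computation gives $\Vert u_0\Vert_{L^2}^2=|c|^2/(1-|p|^2)$ and $M(u_0)=|c|^2/(1-|p|^2)^2$. Since $u_0\in\mathcal W$ the operator $K_{u_0}$ has rank one, so $K_{u_0}^2$ has a single nonzero eigenvalue equal to ${\rm Tr}(K_{u_0}^2)=M(u_0)$; hence $F(u_0)=M(u_0)$ and
$$F(u_0)-\Vert u_0\Vert_{L^2}^2=\frac{|c|^2|p|^2}{(1-|p|^2)^2}>0.$$

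It remains to show that this strict inequality persists in an $H^{1/2}$--neighborhood of $u_0$. Since $u\mapsto\Vert u\Vert_{L^2}^2$ is continuous on $H^{1/2}_+$, it suffices to establish continuity of $F$ at $u_0$. The map $u\mapsto K_u$ is continuous from $H^{1/2}_+$ into the Hilbert--Schmidt class (recall ${\rm Tr}(K_u^2)=M(u)$), so by standard perturbation theory the eigenvalues $\lambda_j(u)$ of $K_u^2$ counted with multiplicity depend continuously on $u$. At $u_0$ only $\lambda_1(u_0)=M(u_0)>0$ is nonzero, hence for $u$ close enough to $u_0$ one has $\lambda_1(u)>\lambda_2(u)$, so $\sigma_1(u)^2=\lambda_1(u)$ is simple and all subsequent distinct values satisfy $\sigma_k(u)^2\leq\lambda_2(u)$ and are small. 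The alternating-series bound
$$|F(u)-\sigma_1(u)^2|=\Bigl|\sum_{k\geq 2}(-1)^{k-1}\sigma_k(u)^2\Bigr|\leq \sigma_2(u)^2$$
then yields $F(u)\to F(u_0)$ as $u\to u_0$ in $H^{1/2}_+$. Consequently $\Vert u\Vert_{L^2}^2<F(u)$ on a full $H^{1/2}$--neighborhood of $u_0$, showing that $u_0\in\Omega\cap H^s_+$ for every $s$, and Theorem~\ref{Exploding} concludes.

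The main subtlety is this continuity argument: $F$ is the alternating sum of the \emph{distinct} eigenvalues of $K_u^2$, a functional whose continuity is not automatic because the number of distinct nonzero eigenvalues can jump under perturbation. Choosing the base point $u_0$ in the rank-one stratum $\mathcal W$ is what bypasses this obstacle, since near such a point the top eigenvalue is simple and well separated from zero while all others are uniformly small, so the alternating-series estimate above trivialises the passage from continuity of individual eigenvalues to continuity of $F$.
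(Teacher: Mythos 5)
Your proposal is correct and follows essentially the same route as the paper: both take the rank-one example $u_0(x)=c\,{\rm e}^{ix}/(1-p\,{\rm e}^{ix})$ (the paper normalizes $c=1$), identify $F(u_0)=M(u_0)>\Vert u_0\Vert_{L^2}^2$, invoke continuity of $F$ at such a point, and conclude from Theorem~\ref{Exploding}. The only difference is that the paper compresses the continuity step into one sentence (``by elementary perturbation theory''), whereas you spell it out, correctly, via Hilbert--Schmidt continuity of $u\mapsto K_u$, min-max continuity of the ordered eigenvalues, and the alternating-series remainder bound $|F(u)-\sigma_1(u)^2|\le\sigma_2(u)^2$, which is precisely why the rank-one base point is so convenient.
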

\begin{proof}
By elementary perturbation theory, it is easy to prove that 
function $F$ is continuous at those $u$ of $H^{1/2}_+(\T)$ such that $K_u^2$ has simple non zero spectrum. Furthermore, in the
particular case
$$u(x)=\frac{{\rm e}^{ix}}{1-p\, {\rm e}^{ix}}, \; p\in \D,\; p\neq 0,$$ it is easy to check that $K_u^2$ has rank one with $\frac 1{(1-|p|^2)^2}$ as simple eigenvalue.
As $\Vert u\Vert_{L^2}^2=\frac 1{1-|p|^2}$, this function belongs to $\Omega $, and moreover it belongs to every $H^s$. 
In view of Theorem \ref{Exploding}, this completes the proof.
\end{proof}

We end this section by giving a simple example of functions in $\Omega$:

\begin{example}
The set of functions $u_0$ whose nonzero eigenvalues of $H_{u_0}^2$ and $K_{u_0}^2$ are all simple, and form the decreasing square summable list
$$\rho_1>\sigma_1>\rho_2>\sigma_2>\dots $$
with
$$\sum_j \rho_j^2<2\sum_{k\, {\rm odd}}\sigma_k^2$$
is a subset of $\Omega$.
\end{example}

\section{A special case}
In this section, we restrict ourselves to the set 
$$\mathcal W:=\left \{u(x)=b+\frac {c\, {\rm e}^{ix}}{1-p{\rm e}^{ix}},\; b,c,p\in\C,  c\neq 0,\; |p|<1\right \}$$ introduced in \eqref{SetW}. As recalled in section 2.2, $\mathcal W$ corresponds to rational functions $u$ with shifted Hankel operator $K_u$ of rank $1$ and hence it is preserved by the damped Szeg\H{o} flow.  It is straightforward to check that the system on variables $b,c,p$ reads
\begin{eqnarray}\label{system}
\left \{ 
\begin{array}{r c l}
i(\dot b+\alpha b)&=&(|b|^2+2M(1-|p|^2))b +Mc\overline p\\
i\dot c&=&(2|b|^2+M)c+2M(1-|p|^2)bp\\
i\dot p&=&M(1-|p|^2)p+c\overline b
\end{array}
\right .
\end{eqnarray}

 In this section, we provide a panorama of the dynamics of the damped Szeg\H{o} equation on $\mathcal W$.\\
In particular, we prove that the set of functions $u_0$ such that, for some $t$, condition
\begin{equation}\label{CondExplode}\Vert S_\alpha(t)u_0\Vert_{L^2}^2<F(u_0).\end{equation}  is satisfied, is a dense open subset of $\mathcal W$ on which the growth of the $H^s$ norm of $ S_\alpha(t)u_0$   as $t$ tends to $+\infty$ is of order $t^{s-\frac12}$. Moreover we indicate the structure of the complement of this set. Let us observe that when $u_0\in\mathcal W$, $F(u_0)=M(u_0)$ so that condition \eqref{CondExplode} reads $\Vert S_\alpha(t)u_0\Vert_{L^2}^2<M(u_0)$. \\

Let us recall the statement given in the introduction in a more precise form.\\
 
 Given $M>0$, we define the following five dimensional hypersurface of $\mathcal W$
$$\mathcal E_M=\{u\in\mathcal W;\;M(u)=M\}.$$
We also denote by $\mathcal C_M$ the circle made of functions of the form
$$u(x)=c\, {\rm e}^{ix}\ ,\ |c|^2=M\ $$ which are invariant by the damped Szeg\H{o} flow \eqref{DS} as a periodic orbit.

In the next statement, we normalise the $H^s$ norm as follows.
\begin{equation}\label{normeHs}
\| u\|_{H^s}^2=\sum_{k=0}^\infty (1+k^2)^s|\hat u(k)|^2\ .
\end{equation}

\begin{theorem}\label{Wbis}
For every $M>0$ and every $\alpha >0$, there exists a codimension $2$ submanifold  $\Sigma_{M,\alpha }$  of $\mathcal E_M$, disjoint from $\mathcal C_M$, invariant by the action of $S_\alpha(t)$, such that $\Sigma_{M,\alpha }\cup \mathcal C_M$ is closed and
\begin{itemize}
\item If $u_0\in \mathcal E_M\setminus (\mathcal C_M\cup \Sigma_{M,\alpha })$, then, for every $s>\frac 12$, as $t\to +\infty $,
$$\| S_\alpha(t)u_0\| _{H^s}^2\sim c^2(s,\alpha ,M)\, t^{2s- 1}$$
with $$c^2(s,\alpha ,M):=\Gamma(2s+1)M^{4s-1}\left ( \frac{\alpha^2+M^2}{2\alpha}   \right )^{1-2s}>0\ .$$
\item If $u_0\in \Sigma_{M,\alpha }$, then $S_\alpha(t)u_0$ tends to $\mathcal C_M$ as $t\to +\infty$, and 
$${\rm dist}(S_\alpha(t)u_0, \mathcal C_M)\simeq {\rm e}^{-\lambda (\alpha ,M)t}\ ,$$
with $\lambda (\alpha ,M)>0\ .$

\end{itemize}
\end{theorem}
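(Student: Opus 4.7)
The proof reduces to an ODE analysis on the 5-dimensional submanifold $\mathcal E_M\subset\mathcal W$ cut out by $|c|^2 = M(1-|p|^2)^2$. A convenient set of $U(1)\times U(1)$-invariant coordinates (under the global phase $u\mapsto {\rm e}^{i\gamma}u$ and the translation $u(x)\mapsto u(x+\tau)$) is $\beta = |b|^2$, $\rho = 1-|p|^2$, $Z = bp\bar c$ subject to $|Z|^2 = \beta M\rho^2(1-\rho)$; writing $X = {\rm Im}\,Z$, $Y = {\rm Re}\,Z$, a direct calculation from \eqref{system} gives
\begin{equation*}
\dot\rho = 2X,\qquad \dot\beta = -2\alpha\beta - 2MX,\qquad \dot Y = -\alpha Y - X(\beta + M - 3M\rho),
\end{equation*}
and an analogous formula for $\dot X$. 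From $\frac{d}{dt}(\beta+M\rho) = -2\alpha\beta$, the limit $L := \lim_t\|u(t)\|_{L^2}^2 = \lim_t(\beta(t)+M\rho(t))$ exists. By Proposition \ref{WeakLimit}, any $H^{1/2}$-weak limit $u_\infty$ of $S_\alpha(t)u_0$ satisfies $b\equiv 0$ along its undamped trajectory, and the $b$-equation $i\dot b = Mc\bar p$ (at $b = 0$) then forces $p\equiv 0$; combined with Rellich, this yields the dichotomy: either $u_\infty\in\mathcal C_M$, $L = M$, and $\rho(t)\to 1$, or $u_\infty = 0$, $L = 0$, and $\rho(t)\to 0$.

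In the first case I would pass to the rotating frame $\tilde b = b\,{\rm e}^{iMt}$, $\tilde c = c\,{\rm e}^{iMt}$, $\tilde p = p$, in which $\mathcal C_M$ becomes a genuine fixed point. The linearization
\begin{equation*}
\delta\dot{\tilde b} = -(iM+\alpha)\delta\tilde b - iMc_0\overline{\delta\tilde p},\qquad \delta\dot{\tilde p} = -iM\delta\tilde p - ic_0\overline{\delta\tilde b}
\end{equation*}
splits into two conjugate antiholomorphic $2\times 2$ blocks with characteristic polynomials $\lambda^2+\alpha\lambda\mp iM\alpha = 0$. Setting $R^2 = (\alpha^2+\alpha\sqrt{\alpha^2+16M^2})/2$, one finds a conjugate pair of unstable eigenvalues with real part $(R-\alpha)/2>0$ and a conjugate pair of stable ones with real part $-(R+\alpha)/2<0$. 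The stable manifold theorem at this hyperbolic periodic orbit identifies $\Sigma_{M,\alpha}\cup\mathcal C_M$ with the stable manifold of $\mathcal C_M$, a smooth codimension-$2$ invariant submanifold of $\mathcal E_M$ on which $S_\alpha(t)u_0\to\mathcal C_M$ exponentially at rate $\lambda(\alpha,M) = (R+\alpha)/2$.

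In the exploding case, the scaling ansatz $\rho\sim B/t$, $\beta\sim a/t^2$, $X\sim X_0/t^2$, $Y\sim Y_0/t^2$ forces (in sequence) $X_0 = -B/2$ from $\dot\rho$, then $a = MB/(2\alpha)$ from the $1/t^2$ balance in $\dot\beta$, then $Y_0 = MB/(2\alpha)$ from that in $\dot Y$, and finally $\alpha B/2+MY_0-M^2B^2 = 0$ from $\dot X$, which uniquely determines $B = (\alpha^2+M^2)/(2\alpha M^2)$. To prove this rigorously, I would show that the rescaled triple $(t\rho(t),\,t^2\beta(t),\,\arg(bp\bar c))$ converges to $(B,\,MB/(2\alpha),\,\theta_\infty)$ with $\sin\theta_\infty = -\alpha/\sqrt{\alpha^2+M^2}$. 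Together with the elementary asymptotic
\begin{equation*}
\|u\|_{H^s}^2 = \beta + M\rho^2\sum_{k\geq 1}(1+k^2)^s(1-\rho)^{k-1} \sim M\,\Gamma(2s+1)\,\rho^{1-2s}\quad (\rho\to 0),
\end{equation*}
this yields $\|S_\alpha(t)u_0\|_{H^s}^2 \sim M\,\Gamma(2s+1)\,B^{1-2s}\,t^{2s-1}$, which after simplification matches the claimed $c^2(s,\alpha,M)$.

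The main obstacle will be the rigorous convergence of the rescaled triple in the exploding case. Since the constraint $|Z|^2 = \beta M\rho^2(1-\rho)$ degenerates at the origin, a direct Hartman--Grobman argument at the ``point at infinity'' is not available; I would either compactify via a Poincar\'e-type change of variables and apply stable-manifold theory at the resulting hyperbolic equilibrium, or close an energy estimate on the deviations $t\rho - B$, $t^2\beta - a$, $\arg Z - \theta_\infty$, exploiting the damping $\alpha>0$ to obtain dissipativity. Once this is secured, the remaining conclusions---the smoothness and codimension of $\Sigma_{M,\alpha}$, the closedness of $\Sigma_{M,\alpha}\cup\mathcal C_M$, the exponential rate $\lambda(\alpha,M)$, and the precise $H^s$-growth with the stated constant---follow by combining the two preceding paragraphs.
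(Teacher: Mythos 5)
Your strategy matches the paper's in outline: pass to the reduced ODE system on $\mathcal W$ in $U(1)\times U(1)$-invariant variables, establish the dichotomy $\rho\to 0$ or $\rho\to 1$ via LaSalle plus the structure of solutions orthogonal to $1$, treat the $\rho\to 1$ case by linearisation at the periodic orbit, and the $\rho\to 0$ case by a $\rho\sim B/t$ scaling. Your constants agree after the translation $\gamma = M\rho$, $\zeta = M\overline Z$: your $B = (\alpha^2+M^2)/(2\alpha M^2)$ is the paper's $\kappa/M$, your $R$ is the paper's $a$, and your exponential rate $(R+\alpha)/2$ is the paper's $(a+\alpha)/2$.

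However, there is a genuine gap in the exploding case, which you flag yourself: the scaling ansatz pins down $B$ formally, but you do not prove that $t\rho(t)\to B$. Neither of the two routes you sketch is what the paper does, and neither is clearly easier. The paper's argument is shorter and worth internalising: rewrite the $\zeta$-equation as $\dot\zeta/(\alpha+iM)+\zeta = f+r$ with $r\in L^1$ and $\mathrm{Im}\,f\sim\frac{\alpha M}{\alpha^2+M^2}\gamma^2$; integrating the imaginary part once gives $\gamma^2\in L^1$ (using $\dot\gamma=-2\,\mathrm{Im}\,\zeta$), and integrating from $t$ to $\infty$ gives $\gamma(t)=\frac{1}{\kappa}\int_t^\infty\gamma^2\,(1+o(1))$, i.e.\ $\dot\Gamma+\Gamma^2/\kappa^2\,(1+o(1))=0$ for $\Gamma(t):=\int_t^\infty\gamma^2$, whence $\gamma\sim\kappa/t$. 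This closes the estimate directly without any compactification or stable-manifold argument at infinity, and it is the step you would need to supply.

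A second, smaller omission concerns the global structure of $\Sigma_{M,\alpha}$. The stable manifold theorem at the hyperbolic periodic orbit produces only a \emph{local} stable manifold. To conclude that the global stable set is a properly embedded codimension-$2$ submanifold of $\mathcal E_M$, and in particular that $\Sigma_{M,\alpha}\cup\mathcal C_M$ is closed, you need quantitative control as the trajectory moves away from $\mathcal C_M$. The paper obtains this from the last part of its Lemma~\ref{bdz} (two-sided bounds relating $\beta(0)$ and the scattering datum $\beta_\infty$), which makes the parametrisation $J:(0,\infty)\times\T^2\to\Sigma_{M,\alpha}$ a proper injective immersion; the closedness and the exclusion of accumulation on $\mathcal C_M$ from outside then follow. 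Your outline does not address this, and the phrase ``follows by combining the two preceding paragraphs'' does not cover it. The local hyperbolicity picture you derived is correct and is indeed the germ of the paper's scattering construction, but the propriety/closedness part needs an explicit argument.
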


Before giving the proof of this result, let us make some basic observations. We write
$$u(t):=S_\alpha(t)u_0=b(t)+\frac{c(t)\, {\rm e}^{ix}}{1-p(t)\, {\rm e}^{ix}},$$ $b,c,p\in\C$.
Since the momentum is a conservation law, $$M(u)=\frac{|c|^2}{(1-|p|^2)^2}$$ remains a constant, denoted by $M$. Denoting by $Q$ the limit as $t$ goes to infinity of $\Vert u(t)\Vert_{L^2}^2=|b|^2+\frac{|c|^2}{(1-|p|^2)}=|b|^2+M(1-|p|^2)$, we get that $M(1-|p|^2)$ tends to $Q$ (from Lemma \ref{limit}, $|b|\to 0$). In particular, $|p(t)|^2$ admits a limit in $[0,1]$ as $t$ tends to infinity.\\
We claim that the following alternative holds:
\begin{itemize}
\item either $\lim_{t\to+\infty}|p(t)|^2=1$, $Q=0$ and $$\Vert u(t)\Vert_{H^s}^2\sim \Gamma(2s+1)\frac{|c|^2}{(1-|p|^2)^{2s+1}}=\frac {\Gamma(2s+1)\, M}{(1-|p|^2)^{2s-1}}\to \infty$$ for any $s>1/2$.
\item or $\lim_{t\to +\infty}|p(t)|^2<1$ and we claim that 
$$\lim_{t\to\infty}|p(t)|^2=0.$$
\end{itemize}
Indeed, let us consider the latter case. As $\lim_{t\to +\infty}|p(t)|^2<1$, the set $\{ u(t)\ ,\ t\ge 0\}$ is relatively compact in any $H^s(\T)$. Denote by $u_\infty$ any $H^{1/2}$ limit of $(S_\alpha(t)u_0)$. As $\mathcal W$ is closed in $H^{1/2}_+$, $u_\infty$ belongs to $\mathcal W$. From Proposition \ref{WeakLimit}, $S_\alpha(t)u_\infty$ solves the cubic Szeg\H{o} equation and so equals $S(t)u_\infty$.
 As $S(t)u_\infty \in \mathcal W$ and $(S(t)u_\infty\vert 1)=0$ for all $t$, $S(t)u_\infty$ is necessarily of the form $x\mapsto \frac{c_\infty(t) {\rm e}^{ix}}{1-p_\infty(t) {\rm e}^{ix}}$. From the equation satisfied by $b$ in system \eqref{system}, 
$$i(\dot b+\alpha b)=(|b|^2+2M(1-|p|^2))b +Mc\overline p,$$ since $b_\infty(t)=0$, we obtain $p_\infty(t)=0$. 

Hence, $\lim_{t\to+\infty}|p(t)|^2=0$, and $$\lim_{t\to+\infty}\Vert u(t)\Vert_{L^2}^2=Q=M(1-\lim_{t\to +\infty} |p(t)|^2)=M\ ,$$ in particular 
$\Vert u_0\Vert^2_{L^2}\ge M$.\\
Eventually, we get the following alternative.
\begin{proposition}\label{altern}
If $u$ is a solution of \eqref{DS} on $\mathcal W$, either $u(t)$ is bounded in any $H^s$, $s>1/2$, as $t\to +\infty $, and  $\Vert u_0\Vert^2_{L^2}\ge M$, or the trajectory is exploding  in the sense that $\Vert u(t)\Vert_{H^s}$ tends to infinity for any $s>\frac 12$.
\end{proposition}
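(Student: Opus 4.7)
The plan is to parametrise $u(t)=S_\alpha(t)u_0$ on $\mathcal W$ as $u(t,x)=b(t)+c(t){\rm e}^{ix}/(1-p(t){\rm e}^{ix})$ and combine the conservation of momentum $|c(t)|^2/(1-|p(t)|^2)^2\equiv M$ with the Lyapunov estimate of Lemma \ref{limit} to show that $|p(t)|^2$ converges to a single limit $\ell\in[0,1]$ as $t\to+\infty$. The two alternatives in the proposition then correspond precisely to $\ell=1$ versus $\ell<1$.

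First, from $\Vert u(t)\Vert_{L^2}^2=|b(t)|^2+M(1-|p(t)|^2)$ and the fact (Lemma \ref{limit}) that the $L^2$-norm decreases to some $Q\ge 0$ while $b(t)=(u(t)|1)\to 0$, I obtain $M(1-|p(t)|^2)\to Q$, hence $|p(t)|^2\to \ell:=1-Q/M\in[0,1]$. In the case $\ell=1$, i.e.\ $Q=0$, I would use the explicit Fourier expansion $\hat u(t)(k)=c(t)p(t)^{k-1}$ for $k\ge 1$ together with the classical asymptotic $\sum_{k\ge 1}k^{2s}|p|^{2k}\sim \Gamma(2s+1)(1-|p|^2)^{-2s-1}$ as $|p|\to 1^-$ (using $-\log|p|^2\sim 1-|p|^2$) and the identity $|c(t)|^2=M(1-|p(t)|^2)^2$ to conclude
$$\Vert u(t)\Vert_{H^s}^2\sim \frac{\Gamma(2s+1)\,M}{(1-|p(t)|^2)^{2s-1}}\longrightarrow +\infty$$
for every $s>\frac12$, which is the exploding branch.

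In the case $\ell<1$, $(1-|p(t)|^2)^{-1}$ stays bounded, so $(b(t),c(t),p(t))$ is confined to a compact subset of the chart domain and the orbit is therefore relatively compact in every $H^s_+$. I would then extract any weak $H^{1/2}$ limit point $u_\infty$ along some $t_n\to+\infty$. By Proposition \ref{WeakLimit}, $S_\alpha(t)u_\infty=S(t)u_\infty$ is a Szeg\H{o} solution with $(S(t)u_\infty\vert 1)\equiv 0$, and since $\mathcal W$ is closed in $H^{1/2}_+$ and preserved by $S(t)$ (the rank of $K_u$ is conserved by the undamped flow too), $S(t)u_\infty$ admits a $\mathcal W$-parametrisation $(b_\infty(t),c_\infty(t),p_\infty(t))$ with $b_\infty(t)\equiv 0$. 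Feeding $b_\infty\equiv 0$ into the first line of \eqref{system} yields $Mc_\infty(t)\overline{p_\infty(t)}\equiv 0$, and since $c_\infty\ne 0$, this forces $p_\infty(t)\equiv 0$. Hence $\ell=0$, $Q=M$, and the decreasing $L^2$-norm gives $\Vert u_0\Vert_{L^2}^2\ge Q=M$, together with boundedness of $\Vert u(t)\Vert_{H^s}$ for every $s>\frac12$.

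The delicate step is the identification $\ell=0$ in the bounded case: a priori the weak limit $u_\infty$ could be any element of $\mathcal W$ whose $S(t)$-trajectory is orthogonal to $1$, and it is the algebraic rigidity of the $b$-equation in \eqref{system} (the only place where the parameter $b$ couples to $p$) that forces $p_\infty\equiv 0$. The remaining ingredients — compactness of the orbit when $\ell<1$ and the explicit $H^s$-blowup when $\ell=1$ — are routine real-analytic estimates on the rational parametrisation of $\mathcal W$; and the uniqueness of the limit of $|p(t)|^2$ (rather than mere subsequential convergence) is already built in by the Lyapunov analysis, so no separate argument is needed to upgrade from subsequences to $t\to+\infty$.
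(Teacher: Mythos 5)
Your proposal is correct and follows essentially the same route as the paper's discussion preceding Proposition~\ref{altern}: convergence of $|p(t)|^2$ via the Lyapunov functional and momentum conservation, the explicit $H^s$-asymptotic when $|p(t)|\to 1$, and the LaSalle limit point combined with the $b$-equation in \eqref{system} to force $p_\infty\equiv 0$ when $|p(t)|$ stays away from $1$. The only presentational difference is that you spell out the standard $\sum_{k\ge 1}k^{2s}|p|^{2k}\sim\Gamma(2s+1)(1-|p|^2)^{-2s-1}$ estimate, which the paper quotes without proof.
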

So we can rephrase the statement of Theorem \ref{Wbis} in view of these observations. Theorem \ref{Wbis} claims that those data of momentum $M$ for which $p(t)\to 0$ as $t\to +\infty$ form the disjoint union of the circle $\mathcal C_M$ and of a three dimensional submanifold $\Sigma_{M,\alpha}$,  which is a union of  trajectories converging exponentially to $\mathcal C_M$ as $t\to +\infty $. 
Furthermore, outside of this set, $1-|p(t)|^2 \to 0$ as $t\to +\infty$, with a universal rate 
$$(1-|p(t)|^2)\sim \frac{\rho (\alpha, M)}{t},$$
where $\rho (\alpha, M)=\frac{\alpha^2+M^2}{2\alpha M^2}>0$.
\s
We split the proof of Theorem \ref{Wbis} into two parts. The first one is a careful analysis of the case $|p(t)|\to 1$ through the differential equations corresponding to \eqref{DS} on some reduced variables. The second part consists in reducing  the system to a scattering problem.

\subsection{The growth of Sobolev norms}

In this section, we consider a solution of \eqref{DS}
$$u(t,x)=b(t)+\frac{c(t)\, {\rm e}^{ix}}{1-p(t)\, {\rm e}^{ix}}$$
such that $|p(t)|^2\to 1$ as $t\to +\infty $, of momentum
$$M=\frac{|c(t)|^2}{(1-|p(t)|^2)^2}\ .$$
In order to avoid the gauge and translation invariances, we appeal to the following reduced variables, 
$$\beta :=|b|^2\ ,\ \gamma :=M(1-|p|^2)\ ,\ \zeta :=Mc\overline b\overline p\ ,$$
which, from system \eqref{system}, satisfy the following reduced system,
\begin{eqnarray}\label{redsys}
\, \left \{
\begin{array}{r c l}
\dot \beta +2\alpha \beta &=&2{\rm Im}\zeta \\
\dot \gamma &=&-2{\rm Im}\zeta \\
\dot \zeta +(\alpha +iM)\zeta &=&(3i\gamma -i\beta )\zeta -2i\beta \gamma M\\
       && +i\gamma ^2(M-\gamma +3\beta )
\end{array}
\right.
\end{eqnarray}
Notice that 
$$|\zeta |^2=(M-\gamma)\gamma ^2\beta \ ,\ \beta \geq 0\ ,\  M\geq \gamma >0\ .$$
From Lemma \ref{limit}, we already know that $\beta (t)\to 0$ as $t\to +\infty $ and that
$$\int_0^\infty \beta (t)\, dt <+\infty \ .$$
Our task is to prove, under the additional assumption $\gamma (t)\to 0$ as $t\to +\infty$, 
that 
$$\gamma (t)\sim \frac{\kappa }{t}$$
with $\kappa =\kappa (\alpha ,M)>0$. 

As a first step, let us establish that
$$\int _0^\infty \gamma (t)^2\, dt <+\infty \ .$$ 
We write the equation on $\zeta $ in system \eqref{redsys} as
\begin{equation}\label{eqzeta}
\frac{\dot \zeta }{\alpha +iM}+\zeta =f+r \ ,
\end{equation}
where
\begin{eqnarray*}
&& f:=i\frac{\gamma ^2}{\alpha +iM}\left (M-\gamma +3\frac{\zeta }{\gamma}\right )\ ,\\
&&r:=-i\frac{\beta }{\alpha +iM}\left (\zeta +2\gamma M -3\gamma^2\right )\ .
\end{eqnarray*}
Notice that $r\in L^1(\R_+)$ and that, as $t\to +\infty $,
$${\rm Im}f(t)\sim \frac{\alpha M}{\alpha ^2+M^2}\gamma (t)^2>0\ .$$
Integrating from $0$ to $T$ the imaginary part of both sides of \eqref{eqzeta}, we obtain, using $\dot \gamma =-2{\rm Im}\zeta $,
$$\int_0^T{\rm Im}f(t)\, dt =O(1)$$
as $T\to +\infty $. This provides
$$\int _0^\infty \gamma (t)^2\, dt <+\infty \ .$$ 
The second step consists in coming back to \eqref{eqzeta} and integrating the imaginary part of both sides from $t$ to $+\infty$. Using $|\zeta |=O(\gamma \sqrt{\beta})$ and $r=O(\beta \gamma )$  , we infer
$$
\gamma (t)(1+o(1))=\frac{2\alpha M}{\alpha ^2+M^2}\left (\int_t^\infty \gamma (s)^2\, ds\right ) (1+o(1))+O\left (\int_t^\infty \beta (s)\gamma (s)\, ds\right )\ .
$$
Using again that $\beta \in L^1(\R_+)$, this yields 
\begin{equation}\label{eqgamma}
\gamma (t)=\frac{1}{\kappa }\left (\int_t^\infty \gamma (s)^2\, ds\right ) (1+o(1))+o\left (\sup_{s\ge t}\gamma (s)\right )\ ,
\end{equation}
with 
\begin{equation}\label{kappa}
\kappa :=\frac{\alpha ^2+M^2}{2\alpha M}\ .
\end{equation}
Using the monotonicity of 
\begin{equation}\label{Gamma}
\Gamma (t):=\int_t^\infty \gamma (s)^2\, ds\ ,
\end{equation}
equation \eqref{eqgamma} leads to
$$\sup_{s\ge t}\gamma (s)=\frac{1}{\kappa }\left (\int_t^\infty \gamma (s)^2\, ds\right ) (1+o(1))\ ,$$
and, coming back to \eqref{eqgamma}, we finally obtain
\begin{equation}\label{eqgammabis}
\gamma (t)= \frac{1}{\kappa }\left (\int_t^\infty \gamma (s)^2\, ds\right ) (1+o(1))\ .
\end{equation}
This equation can be written as an ODE in function $\Gamma $ introduced above in \eqref{Gamma},
$$\dot \Gamma +\frac{1}{\kappa ^2}\Gamma ^2 (1+o(1))=0\ ,$$
which can be solved as
$$\Gamma (t)=\frac{\kappa ^2}{t}(1+o(1))$$
and, coming back to \eqref{eqgammabis}, 
$$\gamma (t)=\frac{\kappa }{t}(1+o(1)).$$
Eventually, one gets 
\begin{eqnarray*}
\Vert u(t)\Vert_{H^s}^2&\sim &\frac{\Gamma (2s+1)\, M}{(1-|p(t)|^2)^{2s-1}}=\Gamma(2s+1)\, M^{2s}\gamma(t)^{1-2s}\\
&\sim &\Gamma(2s+1)\, M^{2s}\kappa^{1-2s}t^{2s-1}
\end{eqnarray*} which, in view of the expression \eqref{kappa} of $\kappa$,  is the expected result.

\subsection{The stable manifold of the periodic orbit}

We come to the second part of the theorem, characterising the trajectories of \eqref{DS} in $\mathcal W$ which converge to $\mathcal C_M$ as $t\to +\infty$. Since $\mathcal C_M$ is a trajectory itself, we may focus on trajectories $\{ u(t)\}$ of momentum $M$ which are disjoint of $\mathcal{C}_M$, and which satisfy
$$\forall t\geq 0\ ,\ \| u(t)\|_{L^2}^2 \geq M .$$
At this stage we are not sure that such  trajectories exist. However we are going to establish a necessary condition on the asymptotic behaviour of $(b(t),c(t),p(t))$ as $t\to +\infty $. 
\s
As a first step, we will use a linearisation procedure that we now illustrate on a``baby example". 
\begin{proposition}\label{baby}
Let  $u_0^\e(x)={\rm e}^{ix}+\varepsilon$ with $\e \in \R$.
For $|\e |$ small enough and $\e \ne 0$, the trajectory $S_\alpha (t)u_0^\e $ is exploding.
\end{proposition}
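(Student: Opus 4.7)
I would prove Proposition \ref{baby} by a contradiction argument exploiting the saddle structure of the linearisation of \eqref{system} at the periodic orbit $\mathcal{C}_1$. Suppose, for some small $\e\neq 0$, that $u^\e(t):=S_\alpha(t)u_0^\e$ is not exploding. Since $u_0^\e\in\mathcal W$ has momentum $M=1$ and parameters $(b,c,p)(0)=(\e,1,0)$, Proposition \ref{altern} together with the analysis preceding it forces $|p(t)|^2\to 0$, hence $b(t)\to 0$, $p(t)\to 0$ and $|c(t)|\to 1$; equivalently $u^\e(t)\to\mathcal C_1$ as $t\to+\infty$.

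To analyse this convergence I would eliminate the oscillation $c_\star(t)=e^{-it}$ of the unperturbed orbit by setting $b(t)=e^{-it/2}B(t)$, $p(t)=e^{-it/2}P(t)$, $c(t)=e^{-it}(1+q(t))$, and linearise \eqref{system}. The conservation $|c|^2=(1-|p|^2)^2$ forces $\operatorname{Re}(q)$ to be quadratic in the small variables, while $\operatorname{Im}(q)$ is a pure gauge direction under the action $(b,c,p)\mapsto(e^{i\phi}b,e^{i\phi}c,p)$; so in the gauge-reduced coordinates $(B,\overline P)$ the linearised system is autonomous:
\begin{equation*}
i\frac{d}{dt}\begin{pmatrix}B\\ \overline P\end{pmatrix}=A\begin{pmatrix}B\\ \overline P\end{pmatrix},\qquad A=\begin{pmatrix}3/2-i\alpha & 1 \\ -1 & -1/2\end{pmatrix}.
\end{equation*}
Its characteristic polynomial $\lambda^2-(1-i\alpha)\lambda+(\tfrac14+\tfrac{i\alpha}{2})=0$ has discriminant $-\alpha(\alpha+4i)$; writing $\sqrt{-\alpha(\alpha+4i)}=X+iY$ with $X>0$ gives $Y<0$ and $Y^2>\alpha^2$, whence $\operatorname{Im}\lambda_+<0$ (stable) and $\operatorname{Im}\lambda_->0$ (unstable) for every $\alpha>0$. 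The stable eigenvector has the form $(1,\mu_+)$; substituting $\lambda=\mu+3/2-i\alpha$ turns the characteristic equation into $\mu^2+(2-i\alpha)\mu+1=0$, so Vieta gives $\mu_+\mu_-=1$, and in particular $\mu_+\neq 0$.

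Next comes the stable-manifold and transversality argument. After quotienting by the $U(1)$-gauge, $\mathcal C_1$ reduces to a single hyperbolic fixed point of the four-dimensional reduced flow, and the classical stable manifold theorem produces a smooth local stable manifold $W^s_{\mathrm{loc}}$ of codimension $2$, tangent at the origin to the stable subspace $\mathbb C\cdot(1,\mu_+)$; any trajectory converging to $\mathcal C_1$ while staying nearby must eventually lie on $W^s_{\mathrm{loc}}$. The tangent vector to $\e\mapsto u_0^\e$ at $\e=0$ corresponds in the $(B,\overline P)$-coordinates to $(1,0)$, which is not colinear with $(1,\mu_+)$ since $\mu_+\neq 0$; hence the family meets $W^s_{\mathrm{loc}}$ transversally at $\e=0$, so $u_0^\e\notin W^s_{\mathrm{loc}}$ for $0<|\e|$ small enough. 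This contradicts convergence and concludes the proof.

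The main obstacle I anticipate is the clean application of the stable manifold theorem in the presence of the gauge symmetry — that is, verifying that the $U(1)$-quotient is smooth near $\mathcal C_1$ and that the reduced flow is genuinely hyperbolic there. A self-contained alternative, likely to align with the construction of $\Sigma_{M,\alpha}$ in Section 4.2, is to build $W^s_{\mathrm{loc}}$ by hand via a Perron-type fixed point: expressing the unstable coordinate as the backwards integral $c_-(t)=-\int_t^{+\infty}e^{-i\lambda_-(t-s)}\pi_-N(v(s))\,ds$, with $N$ gathering the quadratic nonlinearities and $\pi_-$ the unstable spectral projection, Banach's contraction on exponentially decaying perturbations yields $W^s_{\mathrm{loc}}$ directly, and the transversality calculation above goes through unchanged.
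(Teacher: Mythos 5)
Your linearisation is correct and consistent with the paper's: after the phase shift $q_0={\rm e}^{it/2}B$ your autonomous system in $(B,\bar P)$ reduces to the paper's equation $q_0''+\alpha\dot q_0-i\alpha q_0=0$, the spectral splitting into one stable and one unstable mode for every $\alpha>0$ is the same, and the transversality computation ($\mu_+\mu_-=1$, hence $\mu_+\ne 0$, hence $(1,0)$ is not in the stable eigendirection) is right; the $U(1)$-quotient is not an obstacle either, since the action $u\mapsto{\rm e}^{i\phi}u$ is free on $\mathcal W$. The genuine gap is in the final logical step. You have shown (i) that, if the orbit does not explode, then it converges to $\mathcal C_1$ and therefore \emph{eventually} enters $W^s_{\rm loc}$, and (ii) that $u_0^\e\notin W^s_{\rm loc}$ for small $\e\ne 0$; but these two facts are compatible, not contradictory. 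The local stable manifold theorem characterises $W^s_{\rm loc}$ as the set of points near the fixed point whose forward orbit \emph{stays in} the chosen small neighbourhood while converging; an orbit starting near the fixed point may a priori exit that neighbourhood and return much later along the global stable set, which near the fixed point can strictly contain $W^s_{\rm loc}$. Nothing in your argument confines the orbit of $u_0^\e$: the Lyapunov functional only gives $0\le|b(t)|^2-|p(t)|^2\le\e^2$, which leaves $|b(t)|$ and $|p(t)|$ individually uncontrolled. Supplying the missing confinement estimate is essentially the content of Lemma \ref{necessary} and Lemma \ref{bdz}, i.e.\ the hard part of Section 4.2 which the baby example is meant to motivate rather than to use, and your Perron-type alternative constructs $W^s_{\rm loc}$ without resolving this issue.

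The paper's own proof dodges the whole question by never leaving finite time. It linearises on a fixed interval $[0,T]$, on which the remainder $\e^2w^\e$ is bounded uniformly in $\e$, solves the linear ODE for $q_0=(v\vert 1)$ explicitly, and uses that the unstable coefficient $A_+$ is nonzero to ensure $2\alpha\int_0^T|q_0|^2\,ds\ge 3$ for some $T$. The exact Lyapunov identity $\Vert S_\alpha(T)u_0^\e\Vert_{L^2}^2=1+\e^2-2\alpha\e^2\int_0^T(|q_0|^2+\mathcal O_T(\e))\,ds$ then gives $\Vert S_\alpha(T)u_0^\e\Vert_{L^2}^2\le 1-\e^2<1=F(u_0^\e)$ once $\e$ is small, and Theorem \ref{Exploding} applied from time $T$ concludes. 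If you wish to keep the stable-manifold framing you must first prove that any orbit converging to $\mathcal C_1$ with $\Vert u(t)\Vert_{L^2}^2\geq 1$ for all $t$ does so from within the Hartman--Grobman neighbourhood; the finite-time Lyapunov argument is the simpler and self-contained route.
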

\begin{proof}
First observe that $\Vert u_0^\e \Vert_{L^2}^2=1+\varepsilon^2$ and $F(u_0^\e )=M(u_0^\e )=1$. We are going to  prove that, for some $T>0$, for $\varepsilon$ small enough, $\Vert S_\alpha (T)u_0^\e\Vert_{L^2}^2\le 1-\varepsilon^2$, so that Proposition \ref{baby} will follow from Theorem \ref{Exploding}.\\
 Let us prove the claim.
We linearize the flow $S_\alpha (t)$ around the solution ${\rm e}^{i(x-t)}$ for $\e =0$ by writing
$$S_\alpha(t) u_0^\e ={\rm e}^{-it}({\rm e}^{ix}+\varepsilon v+\varepsilon ^2w^\e)$$ where $v$ satisfies
$$i(\partial_tv+\alpha(v\vert1))=v+\Pi({\rm e}^{2ix}\overline v), \;\; v(0,{\rm e}^{ix})=1$$ and, for every $T>0$, for every $s$, there exists $C_{s,T}$ such that 
$$\forall t\in[0,T],\; \Vert w^\e (t)\Vert_{H^s}\le C_{s,T}.$$
From the Lyapunov functional \eqref{Lyapunov}, for any $t\in [0,T]$,
\begin{eqnarray*}
\Vert S_\alpha(t)u_0^\e \Vert_{L^2}^2&=&\Vert u_0^\e \Vert_{L^2}^2-2\alpha\int_0^t|(S_\alpha(s)u_0^\e \vert 1)|^2 ds\\
&=&1+\varepsilon^2-2\alpha \varepsilon^2\int_0^t(|(v(s)\vert 1)|^2+\mathcal O_T(\varepsilon))ds.
\end{eqnarray*}
Let us write 
$$v(t,x)=q_0(t)+q_2(t){\rm e}^{i2x}$$
with
\begin{eqnarray*}
i(\dot q_0+\alpha q_0)&=&q_0+\overline q_2\ ,\\
i\dot q_2&=&q_2+\overline q_0\ ,\\
q_0(0)=1\ &,&\ q_2(0)=0\ .
\end{eqnarray*}
Let us focus on $q_0=(v\vert 1)$. Deriving the first equation, we are left with the following second order ODE,
$$q_0''+\alpha \dot q_0-i\alpha q_0=0\ ,$$
with the initial data
$$q_0(0)=1\ ,\ \dot q_0(0)=-(\alpha +i)\ .$$
The solutions of the characteristic equation
$$\lambda ^2+\alpha \lambda -i\alpha =0$$
are given by
$$\lambda_{\pm}=\frac{-\alpha \pm (a+i\sqrt{a^2-\alpha ^2})}{2}\ .$$
where 
$$
a:=\left ( \frac{\alpha\sqrt{\alpha ^2+16}+\alpha ^2}{2} \right )^{\frac 12}>\alpha \ .
$$

Notice that ${\rm Re}(\lambda_+)>0$ while ${\rm Re}(\lambda_-)<0$. 
This leads to
$$q_0(t)=A_+{\rm e}^{\lambda _+t}+A_-{\rm e}^{\lambda_-t}\ ,$$
with 
\begin{eqnarray*}
A_+&=&\frac{\dot q_0(0)-\lambda_-q_0(0)}{\lambda_+-\lambda_-}=-\frac{(\alpha +i+\lambda_-)}{(a+i\sqrt{a^2-\alpha ^2})}\ ,\\
A_-&=&\frac{\lambda_+q_0(0)-\dot q_0(0)}{\lambda_+-\lambda_-}=\frac{(\alpha +i+\lambda_+)}{(a+i\sqrt{a^2-\alpha ^2})}\ 
\end{eqnarray*}
In particular, 
$$\int_0^t|q_0(s)|^2 ds$$ tends to $+\infty$ with $t$. Let us come back to the expression of 
the $L^2$ norm of our solution,
$$\Vert S_\alpha(t)u_0^\e \Vert_{L^2}^2=1+\varepsilon^2-2\alpha \varepsilon^2\int_0^t(|q_0(s)|^2+\mathcal O_T(\varepsilon))ds.$$
Fix $T>0 $ such that 
$$2\alpha \int_0^T|q_0(s)|^2\, ds\ge 3\ .$$
Fix $\varepsilon >0$ small enough so that $2\alpha T \mathcal O_T(\varepsilon)\leq 1$, we obtain
 $$\Vert S_\alpha (T)u_0^\e \Vert_{L^2}^2\le 1-\varepsilon^2\ ,$$
 which is the claim.
\end{proof}
Let us complete the proof of the second part of Theorem \ref{Wbis}.
First we extend to this general context the  notation introduced in the proof of Proposition \ref{baby}, 
\begin{equation}\label{a}
a:=\left ( \frac{\sqrt{\alpha ^4+16M^2\alpha ^2}+\alpha ^2}{2} \right )^{\frac 12}>\alpha \ .
\end{equation}
Notice that
\begin{equation}\label{magie}
a\sqrt{a^2-\alpha ^2}=2M\alpha \ .
\end{equation}
\begin{lemma}\label{necessary}
If $\{ u(t) \} $ is a trajectory of \eqref{DS} in $\mathcal W$, of momentum $M$, disjoint of $\mathcal C_M$ and such that $$\forall t\geq 0\ ,\ \| u(t)\|_{L^2}^2 \geq M ,$$
then $b(t)\ne 0$ for every $t$, and there exists $\theta \in \T$ such that, as $t\to +\infty $,
\begin{eqnarray*}
{\rm e}^{-itM}\sqrt M\, \frac{\overline {p(t)}}{b(t)}&\to &{\rm e}^{-i\theta }\,
\left (\frac{\sqrt{a^2-\alpha ^2}+i(\alpha -a)}{2M}-1\right )\ ,\\ 
{\rm e}^{itM}c(t)&\to &\sqrt M\, {\rm e}^{i\theta }\ .
\end{eqnarray*}
\end{lemma}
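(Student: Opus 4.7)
The plan is to linearize \eqref{DS} around the periodic orbit $\mathcal C_M$ and to apply a stable-manifold-type argument: after passing to a rotating frame, the reduced system is a $2\times 2$ saddle perturbed by an asymptotically vanishing nonlinearity, and any non-exploding trajectory converging to $\mathcal C_M$ must lie along the stable eigendirection, whose slope gives the value claimed in the lemma.

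The first step is to read the hypotheses. The condition $\|u(t)\|_{L^2}^2=|b(t)|^2+M(1-|p(t)|^2)\ge M$ rewrites as $|b(t)|^2\ge M|p(t)|^2$, so a zero of $b$ at some $t_0$ would force $p(t_0)=0$ and $u(t_0)\in\mathcal C_M$, contradicting disjointness by invariance; hence $b(t)\ne 0$ for all $t$. The same inequality together with Proposition~\ref{altern} excludes $|p|\to 1$, so $|p|\to 0$, and Lemma~\ref{limit} gives $b(t)=(u(t)|1)\to 0$. Proposition~\ref{WeakLimit} and closedness of $\mathcal W$ in $H^{1/2}_+$ then show that every $\omega$-limit point of $u(t)$ lies on $\mathcal C_M$; in particular, there exists a sequence $t_n\to+\infty$ along which $c(t_n)\,{\rm e}^{iMt_n}\to \sqrt M\,{\rm e}^{i\theta}$ for some $\theta\in\T$. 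The analysis below will upgrade this subsequential limit to a full limit, which is the second assertion of the lemma.

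For the first assertion, with this $\theta$ fixed, I introduce the reduced variables $B(t):=b(t)\,{\rm e}^{-i\theta+2iMt}$ and $\Pi(t):=\overline{p(t)}\,{\rm e}^{iMt}$, and write $c(t)=\sqrt M\,{\rm e}^{i(\theta-Mt)}(1+\xi(t))$. Substituting into \eqref{system} produces
\begin{equation*}
\dot B=-\alpha B-iM^{3/2}\Pi+N_1(B,\Pi,t),\qquad \dot\Pi=i\sqrt M\,B+2iM\Pi+N_2(B,\Pi,t),
\end{equation*}
with $|N_j|\le C(|B|+|\Pi|)(|B|+|\Pi|+|\xi(t)|)$. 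The characteristic polynomial of the linear part is $\lambda^2+(\alpha-2iM)\lambda-(M^2+2iM\alpha)$, of discriminant $\alpha^2+4iM\alpha$; by \eqref{a} and \eqref{magie} this discriminant admits the square root $a+i\sqrt{a^2-\alpha^2}$, giving
$$\lambda_\pm=\frac{-\alpha+2iM\pm(a+i\sqrt{a^2-\alpha^2})}{2},\quad {\rm Re}\,\lambda_+=\frac{a-\alpha}{2}>0,\quad {\rm Re}\,\lambda_-=-\frac{a+\alpha}{2}<0.$$

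Finally I decompose $X=(B,\Pi)=\mu\, v_++\nu\, v_-$ along the eigenbasis of the linear part. The Duhamel identity $\mu(t)=-\int_t^{+\infty}{\rm e}^{\lambda_+(t-s)}(\pi_+N)(s)\,ds$ (justified by $X\to 0$) combined with the bound on $N$ yields $|\mu(t)|=O\bigl((|X(t)|+\sup_{s\ge t}|\xi(s)|)|X(t)|\bigr)$, so $\mu=o(\nu)$. A standard bootstrap then closes the exponential decay $|X(t)|\le C\,{\rm e}^{{\rm Re}\lambda_-\,t}$ and $|\xi(t)|=O({\rm e}^{2{\rm Re}\lambda_-\,t})$, and integration of $(\nu\,{\rm e}^{-\lambda_-t})'$ from $t$ to $+\infty$ gives $\nu(t)=\nu_\infty\,{\rm e}^{\lambda_-t}(1+o(1))$ for some $\nu_\infty\in\C\setminus\{0\}$ (if $\nu_\infty=0$ then $X$ would decay faster than any stable exponential, forcing by uniqueness $X\equiv 0$ and hence $b\equiv 0$, a contradiction). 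Reading $(A-\lambda_-I)v_-=0$ gives $v_2/v_1=i(\alpha+\lambda_-)/M^{3/2}$, whence
$$\sqrt M\,\frac{v_2}{v_1}=\frac{i(\alpha+\lambda_-)}{M}=\frac{\sqrt{a^2-\alpha^2}+i(\alpha-a)}{2M}-1.$$
Since $\overline{p(t)}/b(t)=(\Pi(t)/B(t))\,{\rm e}^{iMt-i\theta}$, multiplying by $\sqrt M\,{\rm e}^{-iMt}$ and letting $t\to+\infty$ gives the first claim. The main technical obstacle is the bootstrap step, which must simultaneously propagate exponential decay of $X$, smallness of $\xi$, and the $\mu=o(\nu)$ splitting in presence of the non-autonomous remainder $N_j$; once that is closed, the identification of $\sqrt M\,v_2/v_1$ is purely algebraic.
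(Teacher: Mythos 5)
Your algebra is right: the rotating-frame variables $B,\Pi,\xi$, the characteristic polynomial $\lambda^2+(\alpha-2iM)\lambda-(M^2+2iM\alpha)$, the identification of the square root of the discriminant with $a+i\sqrt{a^2-\alpha^2}$ via \eqref{magie}, the eigenvalues $\lambda_\pm$, and the slope $\sqrt M\,v_2/v_1=\tfrac{i(\alpha+\lambda_-)}{M}$ all check out and match the quantity the lemma asks for. The route, however, is genuinely different from the paper's. The paper does \emph{not} attempt to prove exponential decay of $(b,p)$ inside the proof of Lemma~\ref{necessary}: it linearises at each large time $T$ with small parameter $\varepsilon(T)=|b(T)|$, works only on a fixed window $[T,T+R]$ where the linearisation is honest up to $O_R(\varepsilon)$, and then uses the constraint $\|u(t+T)\|_{L^2}^2\geq M$ \emph{pointwise in $t\in[0,R]$} to deduce $\limsup_T|A_+(T)|^2 e^{(a-\alpha)R}\leq B$, so that $R\to\infty$ kills the unstable coefficient. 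This is a soft argument with no Duhamel iteration; the exponential decay only appears later, in Lemma~\ref{bdz}, and there it is obtained \emph{as a consequence} of the ratio $\delta/\beta\to(a-\alpha)/(a+\alpha)$ that Lemma~\ref{necessary} provides.

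Your plan inverts this order and therefore has to carry a full stable-manifold argument, and two steps in it are genuine gaps as written. First, you fix $\theta$ from a \emph{subsequential} limit of $e^{itM}c(t)$ and then build $\xi$ with that $\theta$, but the control $|\xi(t)|\to 0$ that your remainder estimates require presupposes the \emph{full} limit; as written this is circular. (The fix is cheap: from \eqref{system}, $\frac{d}{dt}\bigl(e^{itM}c(t)\bigr)$ is $O(|b|^2+|b||p|)$, and $|p|^2\leq|b|^2/M\in L^1(\R_+)$ by the hypothesis and Lemma~\ref{limit}, so $e^{itM}c(t)$ converges — you should establish this first, as the paper does.) Second, the step you label ``a standard bootstrap'' is the real work and is left unproved. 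Your Duhamel bound
$$|\mu(t)|\leq C\int_t^\infty e^{-{\rm Re}\lambda_+\,(s-t)}\,|X(s)|\bigl(|X(s)|+|\xi(s)|\bigr)\,ds$$
does \emph{not} directly give $|\mu(t)|=O\bigl((|X(t)|+\sup_{s\ge t}|\xi(s)|)|X(t)|\bigr)$, because that requires comparing $|X(s)|$ for $s\geq t$ to $|X(t)|$, i.e.\ exactly the decay you are trying to prove. Closing this requires the usual stable-manifold iteration (choose $t_0$ large so $\sup_{s\geq t_0}|X(s)|$ is small, propagate an exponential envelope $|X(t)|\leq m(t_0)e^{\sigma(t-t_0)}$ for some $\sigma<0$ by a continuity/contraction argument on the pair $(\mu,\nu)$ together with $|\dot\xi|=O(|X|^2)$), and the non-autonomous coupling through $\xi$ has to be carried along. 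None of this is unreasonable, but it is not ``standard'' in the sense of following from what you wrote, and the paper's windowed argument is designed precisely to avoid it at this stage. Either do the stable-manifold estimate in full, or switch to the paper's strategy of exploiting $\|u(t)\|_{L^2}^2\geq M$ on arbitrary windows to extract $A_+(T)\to 0$ without any decay estimate.
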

\begin{proof} Since 
$$\| u(t)\|_{L^2}^2=|b(t)|^2+M(1-|p(t)|^2), $$
the condition on $u$ reads
\begin{equation}\label{bp}
\forall t, |b(t)|^2\geq M|p(t)|^2\ .
\end{equation}
Since the trajectory is disjoint from $\mathcal C_M$, $b$ and $p$ cannot cancel at the same time, and therefore $b(t)\ne 0$ for every $t$. From the ODE on $c$ in \eqref{system}, we have
\begin{equation}\label{eq:c}
i\frac{d}{dt}\left ({\rm e}^{itM}c(t)\right )={\rm e}^{itM}\, (2|b(t)|^2c(t)+2M(1-|p(t)|^2)b(t)p(t) )\ .\end{equation}
Recall that $|b|^2\in L^1(\R_+)$. Therefore \eqref{bp} implies that $|p|^2\in L^1(\R_+)$, hence 
$$\frac{d}{dt}\left ({\rm e}^{itM}c(t)\right )\in L^1(\R_+)\ ,$$
and consequently 
$${\rm e}^{itM}c(t)\to c_\infty \ .$$
Notice that, since $p(t)\to 0$, $|c_\infty |^2=M$, hence there exists $\theta \in \T$ such that
$$c_\infty =\sqrt M\,  {\rm e}^{i\theta }\ .$$
In order to establish the other condition, we fix $T\gg 1$ and we set
$$\e =\e (T):=|b(T)|\ .$$
Integrating from $T$ to $+\infty $  the identity
$$\frac{d}{dt}(|b(t)|^2+M(1-|p(t)|^2))=-2\alpha |b(t)|^2\ ,$$
we obtain
\begin{equation}
|b(T)|^2-M|p(T)|^2=2\alpha \int_T^\infty |b(t)|^2\, dt\ .
\end{equation}
Consequently,
$$|b(T)|=\e \ ,\ |p(T)|\leq \frac{|b(T)|}{\sqrt M}=O(\e )\ ,\ |c(T)-c_\infty {\rm e}^{-iTM}|=O\left (\int_T^\infty |b(t)|^2\, dt\right )=O(\e ^2)\ ,$$
where the last estimate comes from integrating \eqref{eq:c}.
In other words, in any $H^s$ space,
\begin{eqnarray*}
&&{\rm dist}\left (u(T), c_\infty {\rm e}^{-iTM}{\rm e}^{ix}\right )=O(\e )\ ,\\
&& {\rm dist}\left ( u(T), b(T)+ c_\infty {\rm e}^{-iTM}{\rm e}^{ix} + c_\infty {\rm e}^{-iTM}p(T) {\rm e}^{2ix} \right )=O(\e ^2)\ .
\end{eqnarray*}
At this stage, we are going to describe $u(t+T)=S_\alpha(t)u(T)$ for $t\ge 0$ by means of the linearised equation, exactly as in the proof of Proposition \ref{baby}. We obtain
$$S_\alpha(t)u(T)(x)= {\rm e}^{-itM}\left (c_\infty {\rm e}^{-iTM}{\rm e}^{ix}+\e v(t,x)+\e ^2w(t,x)\right )\ ,$$
where $v$ is the solution of the linearised problem
\begin{eqnarray*}
i(\partial_tv+\alpha (v|1))+Mv&=&2Mv+\Pi \left (c_\infty ^2{\rm e}^{-2iTM}{\rm e}^{2ix}\overline v\right )\ ,\\
v\left (0,x\right )&=&\frac{b(T)}{\e}+c_\infty {\rm e}^{-iTM}\frac{p(T)}{\e}{\rm e}^{2ix}\ ,
\end{eqnarray*}
and, for every $R>0$, there exists $C_{s,R}$ independent of $T$ such that
$$\forall t\in [0,R], \| w(t)\|_{H^s}\leq C_{s,R}\ .$$
This leads to
\begin{eqnarray*}
\| S_\alpha(t)u(T)\|_{L^2}^2-M&=&\| u(T)\|_{L^2}^2-M-2\alpha \int_0^t |(S(s)u(T)|1)|^2\, ds\\
&=&|b(T)|^2-M|p(T)|^2-2\alpha \e^2\int_0^t(|(v(s)|1)|^2+O_R(\e ))\, ds
\end{eqnarray*}
for every $t\in [0,R]$. In other words, the condition $\| u(t+T)\|^2\ge M$ reads as follows : for every $R>0$, there exists $c_R>0$ such that, for every $t\in [0,R]$, for every $T>0$,
\begin{equation}\label{ineq}
|b(T)|^2-M|p(T)|^2-2\alpha \e^2\int_0^t|(v(s)|1)|^2\, ds\geq -c_R\e ^3\ .
\end{equation}
Let us compute $v(t)$. From the linearized problem, we find
$$v\left (t,x\right )=q_0(t)+q_2(t){\rm e}^{2ix}\ ,$$
with
\begin{eqnarray*}
i(\dot q_0+\alpha q_0)&=&Mq_0+c_\infty ^2{\rm e}^{-2iTM}\overline q_2\ ,\\
i\dot q_2&=&Mq_2+c_\infty ^2{\rm e}^{-2iTM}\overline q_0\ ,\\
q_0(0)=\frac{b(T)}{\e}\ &,&\ q_2(0)=c_\infty {\rm e}^{-iTM}\frac{p(T)}{\e}\ .
\end{eqnarray*}
Let us focus on $q_0$. Deriving the first equation, we have
\begin{eqnarray*}
i(q_0''+\alpha \dot q_0)&=&M\dot q_0+c_\infty ^2{\rm e}^{-2iTM}\overline {\dot q_2}\ ,\\
&=&M\dot q_0+c_\infty ^2{\rm e}^{-2iTM}\left (iM\overline q_2+i\overline c_\infty ^2{\rm e}^{2iTM} q_0\right )\\
&=&-M(\alpha +iM)q_0+iM^2q_0\\
&=&-\alpha Mq_0\ .
\end{eqnarray*}
We are left with the following second order ODE,
$$q_0''+\alpha \dot q_0-i\alpha Mq_0=0\ ,$$
with the initial data
$$q_0(0)=\frac{b(T)}{\e }\ ,\ \dot q_0(0)=-(\alpha +iM)\frac{b(T)}{\e }-iMc_\infty {\rm e}^{-iTM}\frac{\overline {p(T)}}{\e}\ .$$
Again, the solutions of the characteristic equation
$$\lambda ^2+\alpha \lambda -iM\alpha =0$$
are given by
$$\lambda_{\pm}=\frac{-\alpha \pm (a+i\sqrt{a^2-\alpha ^2})}{2}\ .$$
Again, ${\rm Re}(\lambda_+)>0>{\rm Re}(\lambda_-)$ and this leads to
$$q_0(t)=A_+{\rm e}^{\lambda _+t}+A_-{\rm e}^{\lambda_-t}\ ,$$
with 
\begin{eqnarray*}
A_+&=&\frac{\dot q_0(0)-\lambda_-q_0(0)}{\lambda_+-\lambda_-}=-\frac{(\alpha +iM+\lambda_-)b(T)+i Mc_\infty {\rm e}^{-iTM}\overline {p(T)}}{\e (T)(a+i\sqrt{a^2-\alpha ^2})}\ ,\\
A_-&=&\frac{\lambda_+q_0(0)-\dot q_0(0)}{\lambda_+-\lambda_-}=\frac{(\alpha +iM+\lambda_+)b(T)+i Mc_\infty {\rm e}^{-iTM}\overline {p(T)}}{\e (T)(a+i\sqrt{a^2-\alpha ^2})}\ 
\end{eqnarray*}
Rephrasing \eqref{ineq}, we must have, for every $R>0$, for every $t\in [0,R]$, for every $T>0$,
\begin{equation}\label{ineq2}
1-M\frac{|p(T)|^2}{\e (T)^2}-2\alpha \int_0^t|q_0(t)|^2\, dt \geq -c_R\, \e (T)\ .
\end{equation}
Computing the integral of $|q_0|^2$, we infer, for some uniform constant $B>0$, 
$$\frac{2\alpha }{a-\alpha }|A_+(T)|^2{\rm e}^{(a-\alpha )R}\leq c_R\, \e (T)+ B\ .$$
Taking the upper limit of both sides as $T\to \infty$, we infer
$$\frac{2\alpha }{a-\alpha } \limsup_{T\to \infty} |A_+(T)|^2{\rm e}^{(a-\alpha )R}\leq B\ , $$
and finally, making $R\to +\infty $,
$$\limsup_{T\to \infty} |A_+(T)|^2=0\ .$$
Coming back to the expression of $A_+(T)$ and of $c_\infty $ above, this yields
$${\rm e}^{-iTM}\sqrt M \frac{\overline {p(T)}}{b(T)}\to i\frac{\alpha +iM+\lambda_-}{M}{\rm e}^{-i\theta}={\rm e}^{-i\theta }\,
\left (\frac{\sqrt{a^2-\alpha ^2}+i(\alpha -a)}{2M}-1\right )\ .$$
This completes the proof of Lemma \ref{necessary}.
\end{proof}
\begin{remark}\label{uniform}
For further reference, it is useful to observe that the result of Lemma \ref{necessary} can be made uniform. Indeed, assume that we have a sequence of solutions $\{ u_n\}$ of fixed momentum $M$ satisfying $\| u_n(t)\|_{L^2}^2\geq M$ for every $t, n$ and such that $(u_n(T)\vert 1)\td_T,{+\infty} 0$ uniformly with respect to $n$. Then we claim that the  two convergences established by Lemma \ref{necessary} are also uniform with respect to $n$. Indeed, this is straightforward for $c_n(t)$, since, as we already noticed,
$$|c_n(t)-c_{n,\infty}{\rm e}^{-itM}|\leq C\int_t^\infty |b_n(s)|^2\, ds \leq C'|b_n(t)|^2\ .$$
As for the second quantity, we just need to reproduce the above linearisation proof by observing that the linearisation is made near a compact sequence of solutions $\{ c_{n,\infty}{\rm e}^{-itM}\} $ with 
a family $\{ \e_n(T)\}$ of uniformly small parameters, so that, for every $R>0$,  the bound
$$\sup_{t\in [0,R]}\| w_n(t)\|_{H^s}\leq C_{s,R}$$
is uniform with respect to $n$. The result then follows from the estimate
$$\frac{2\alpha }{a-\alpha } \limsup_{T\to \infty} \sup_n|A_{n,+}(T)|^2{\rm e}^{(a-\alpha )R}\leq B\ . $$
\end{remark}
As a next step, we show that the corresponding solutions of the reduced  system on $\beta :=|b|^2, \delta :=M|p|^2=M-\gamma, \zeta :=Mc\overline{pb}$, satisfy a nonlinear scattering problem.
\begin{lemma}\label{bdz}
Let $\{ u(t) \} $ be a trajectory of \eqref{DS} in $\mathcal W$, of momentum $M$, disjoint of $\mathcal C_M$, such that $$\forall t\geq 0\ ,\ \| u(t)\|_{L^2}^2 \geq M .$$ Write
\begin{eqnarray*}
&&u(t,x)=b(t)+\frac{c(t){\rm e}^{ix}}{1-p(t){\rm e}^{ix}}\ ,\\
&&\beta (t):=|b(t)|^2\ ,\ \delta (t):=M|p(t)|^2\ ,\ \zeta (t):=Mc(t)\overline {b(t)p(t)}\ .
\end{eqnarray*}
Then there exists $\beta_\infty>0$ such that, as $t\to +\infty$,
\begin{eqnarray*}
\beta (t)&=&\beta_\infty \, {\rm e}^{-(a+\alpha)t}\left (1+O\left ({\rm e}^{-(a+\alpha )t }\right )\right )\ ,\\
\delta (t)&=&\frac{a-\alpha}{a+\alpha}\beta_\infty \, {\rm e}^{-(a+\alpha)t}\left (1+O\left ({\rm e}^{-(a+\alpha )t }\right )\right )\ ,\\
\zeta (t)&=&\left (\frac{\sqrt{a^2-\alpha^2}+i(\alpha -a)}{2}-M\right )\beta_\infty \, {\rm e}^{-(a+\alpha)t}\left (1+O\left ({\rm e}^{-(a+\alpha )t }\right )\right )\ .
\end{eqnarray*}
Conversely, for every $\beta_\infty >0$, there exists a unique solution $(\beta,\delta ,\zeta)$ of the reduced system
\begin{eqnarray*}
\left\{
\begin{array}{r c l}
\dot \beta +2\alpha \beta &=&2{\rm Im}\zeta \\
\dot \delta &=&2{\rm Im}\zeta \\
\dot \zeta +(\alpha -2iM)\zeta &=&-i(3\delta +\beta )\zeta +i(M-\delta)^2 (\delta +\beta )-2i\beta \delta (M-\delta)
\end{array}
\right.
\end{eqnarray*}
with the above asymptotic expansion as $t\to +\infty $. 

Furthermore, in this context, for every $C>0$, there exists $C'>0$ such that
\begin{itemize}
\item  If  $\beta (0)\geq C^{-1}$, then $\beta _\infty \geq (C')^{-1}$\ .
\item If $\beta (0)\leq C\ ,$ then $\beta_\infty \leq C'$.
\end{itemize}
\end{lemma}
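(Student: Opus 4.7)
The strategy combines Lemma \ref{necessary} with asymptotic ODE analysis to extract the leading behaviour, then a rescaling bootstrap to refine the error, and finally a Lyapunov--Perron / Duhamel stable-manifold construction for the converse. For the forward direction, Lemma \ref{necessary} applies: multiplying its two convergences yields
\[
\frac{\zeta(t)}{\beta(t)} \;=\; \frac{Mc(t)\overline{p(t)}}{b(t)} \;\longrightarrow\; \zeta_\star \;:=\; \frac{\sqrt{a^2-\alpha^2}+i(\alpha-a)}{2}-M\ ,
\]
so $\mathrm{Im}\,\zeta_\star = (\alpha - a)/2$. The first reduced equation then gives $(\ln\beta)' = -2\alpha + 2\,\mathrm{Im}(\zeta/\beta) \to -(a+\alpha)$; since $|\zeta|^2 = \beta\delta(M-\delta)^2 \leq M^2\beta^2$ forces $|\zeta|\leq M\beta$ (making the deviation of $(\ln\beta)'$ from its limit integrable), this integrates to $\beta(t) = \beta_\infty e^{-(a+\alpha)t}(1+o(1))$ for some $\beta_\infty > 0$. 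Integrating $\dot\delta = 2\,\mathrm{Im}\,\zeta$ from $t$ to $+\infty$ with $\delta(+\infty)=0$ yields the claimed $\delta$-asymptotic with ratio $(a-\alpha)/(a+\alpha)$, and $\zeta \sim \zeta_\star \beta$ yields that of $\zeta$. To upgrade $o(1)$ to $O(e^{-(a+\alpha)t})$, I rescale $\tilde X(t) = e^{(a+\alpha)t}X(t)$ for $X\in\{\beta,\delta,\zeta\}$; the system for $(\tilde\beta,\tilde\delta,\tilde\zeta)$ has the constant equilibria $\beta_\infty(1,(a-\alpha)/(a+\alpha),\zeta_\star)$, with nonlinearities of size $O(e^{-(a+\alpha)t})$ after rescaling, so a Gronwall bootstrap closes the refined estimate.

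\textbf{Converse via the linearisation.} I linearise the reduced system at $(0,0,0)$ in the real variables $(\beta,\delta,\zeta_R,\zeta_I)$. Setting $\mu = \lambda + \alpha$, the characteristic polynomial factors to $\mu^4 - \alpha^2\mu^2 - 4M^2\alpha^2 = 0$, with roots $\mu = \pm a$ and $\mu = \pm i\sqrt{a^2-\alpha^2}$. The four eigenvalues are therefore $-(a+\alpha)$, $a - \alpha$, and $-\alpha \pm i\sqrt{a^2-\alpha^2}$; the critical stable eigenvalue $-(a+\alpha)$ has eigenvector proportional to $\vec e := (1,(a-\alpha)/(a+\alpha),\mathrm{Re}\,\zeta_\star,\mathrm{Im}\,\zeta_\star)$, which is consistent with the forward direction via the identity $a\sqrt{a^2-\alpha^2} = 2M\alpha$. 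Taking the ansatz $\vec X(t) = \beta_\infty e^{-(a+\alpha)t}\vec e + \vec R(t)$, I set up a Duhamel fixed point in the weighted space $\{\vec R : \sup_{t\geq 0}e^{(a+\alpha)t}|\vec R(t)| < \infty\}$, imposing the Lyapunov--Perron selection condition at $+\infty$ to eliminate the unstable mode of eigenvalue $a-\alpha$; for each $\beta_\infty > 0$ this yields a unique fixed point, giving the desired trajectory.

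\textbf{Uniform bounds and main obstacle.} By construction, the map $\beta_\infty \mapsto \beta(0)$ is smooth with derivative $1$ at the origin and hence a local diffeomorphism of neighbourhoods of $0$, so $\beta(0) \sim \beta_\infty$ in that regime. For general data, the decreasing Lyapunov functional from Lemma \ref{limit} provides the a priori bound $\beta(t) \leq \|u_0\|_{L^2}^2 \leq C$, which combined with the asymptotic gives $\beta_\infty \leq C'$; the lower bound follows by continuation along the invariant manifold. The main obstacle is the Lyapunov--Perron step itself: one must exploit the spectral gap between the critical rate $a+\alpha$ and the slower stable rate $\alpha$ to isolate the single leading mode and simultaneously eliminate the unstable mode $a-\alpha$, via a careful Duhamel decomposition along the spectral projectors of the linear part.
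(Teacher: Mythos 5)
Your overall architecture matches the paper's: extract the asymptotic rate from Lemma~\ref{necessary}, realize the four-dimensional linearized system has eigenvalues $-(a+\alpha),\,a-\alpha,\,-\alpha\pm i\sqrt{a^2-\alpha^2}$, identify the stable direction at rate $a+\alpha$, and build the converse by a Duhamel/Lyapunov--Perron fixed point in a weighted space (this is exactly what the paper does, phrased as solving the integral equation $X(t)={\rm e}^{-tA}X_\infty-\int_t^\infty{\rm e}^{(s-t)A}Q(X(s))\,ds$ on $[T,\infty)$). However, there are two genuine gaps in the details.

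First, in the forward direction you claim that $(\ln\beta)'+(a+\alpha)=2\bigl[\mathrm{Im}(\zeta/\beta)-\mathrm{Im}\,\zeta_\star\bigr]$ is integrable because $|\zeta|\le M\beta$. But boundedness of $\zeta/\beta$ says nothing about the rate at which $\zeta/\beta\to\zeta_\star$; if the convergence were as slow as $1/t$ the deviation would not be $L^1$ and $\beta_\infty$ would not exist. The paper does not attempt to integrate $(\ln\beta)'$ directly. Instead it first combines $\delta/\beta\to(a-\alpha)/(a+\alpha)$ with the exact identity $\beta(t)-\delta(t)=2\alpha\int_t^\infty\beta$ to get $\beta(t)\big/\int_t^\infty\beta\to a+\alpha$, hence only the coarse bound $|X(t)|\le C_\e\,{\rm e}^{-(a+\alpha-\e)t}$ for every $\e>0$. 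Since $Q(X)$ is quadratic--cubic, this makes the Duhamel integral decay at rate $2(a+\alpha)-2\e$, strictly faster than any eigenvalue, forcing ${\rm e}^{tA}X(t)\to X_\infty$ with $X_\infty$ in the $\alpha+a$ eigenspace; the sharp $O({\rm e}^{-(a+\alpha)t})$ error then falls out of the integral equation. Your rescaling/Gronwall bootstrap is a plausible alternative to this last step, but it cannot even begin without a coarse a priori decay rate, which you never establish.

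Second, the uniform bounds at the end are too quick. For ``$\beta(0)\le C\Rightarrow\beta_\infty\le C'$'', the $L^2$ Lyapunov bound $\beta(t)\le C$ plus the \emph{non-uniform} asymptotic $\beta(t)=\beta_\infty{\rm e}^{-(a+\alpha)t}(1+o(1))$ does not control $\beta_\infty$: the implicit constant in the $o(1)$ depends on the trajectory. The paper has to work for this: it first proves a uniform exponential bound $\beta(t)\le K\,{\rm e}^{-\alpha t}$ via the identity $\frac{d}{dt}\bigl({\rm e}^{2\alpha t}\int_t^\infty\beta\bigr)=-\delta\,{\rm e}^{2\alpha t}\le 0$ together with a mean-value argument controlling $\dot\beta$, then invokes Remark~\ref{uniform} to make the convergence of $\delta/\beta$ uniform, upgrades to $\beta(t)\le K'{\rm e}^{-(a+\alpha-\e)t}$ with $K'$ depending only on $(\alpha,M,C,\e)$, and only then reads off $|X_\infty|\le C'$ from the integral identity $X_\infty=X(0)+\int_0^\infty{\rm e}^{sA}Q(X(s))\,ds$. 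Similarly, ``continuation along the invariant manifold'' is not an argument for the lower bound; the paper derives it by contradiction from the local estimate $|X(0)|\le K|X_\infty|$ valid for $|X_\infty|$ small.
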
 
\begin{proof}
Setting $\delta :=M-\gamma$ in the reduced system \eqref{redsys} in $\beta, \gamma, \zeta$, we indeed obtain
\begin{eqnarray*}
\left\{
\begin{array}{r c l}
\dot \beta +2\alpha \beta &=&2{\rm Im}\zeta \ ,\\
\dot \delta &=&2{\rm Im}\zeta \ ,\\
\dot \zeta +(\alpha -2iM)\zeta &=&-i(3\delta +\beta )\zeta +i(M-\delta)^2 (\delta +\beta )-2i\beta \delta (M-\delta)\ 
\end{array}
\right.
\end{eqnarray*}
Furthermore, we already know that $(\beta (t),\delta (t),\zeta (t))\to (0,0,0)$ as $t\to +\infty$, and that
$$\int_0^\infty \beta (t)\, dt <\infty \ .$$
From the first two equations, we infer
$$\beta (t)-\delta (t)=2\alpha \int_t^\infty \beta (s)\, ds\ .$$
On the other hand, from Lemma \ref{necessary}, we have 
$$\frac{\delta (t)}{\beta (t)}\td_t,{+\infty} \left |\frac{\sqrt{a^2-\alpha ^2}+i(\alpha -a)}{2M}-1\right |^2 =
\frac{a-\alpha}{a+\alpha}\  ,$$
as an elementary calculation using \eqref{magie} shows.
Combining the above two informations, we obtain
$$\frac{\beta (t)}{\int_t^\infty \beta (s)\, ds}\td_t,{+\infty}a+\alpha \ ,$$
and consequently
$$\log \left (\int_t^\infty \beta (s)\, ds\right )=-(a+\alpha )t(1+o(1))\ .$$
In particular, for every $\e >0$, there exists $C_\e $ such that
$$\beta (t)\leq C_\e \, {\rm e}^{-(a+\alpha -\e)t}\ ,\ t\ge 0\ .$$
The same estimate holds for $\delta (t)$, and, in view of $|\zeta |=(M-\delta )\sqrt{\beta \delta }$, for $|\zeta (t)|$. Writing
$$X(t):=\left (\begin{array}{c}\beta (t)\\ \delta (t)\\ \zeta_R(t):={\rm Re}\zeta (t)\\ \zeta_I(t):={\rm Im}\zeta (t)\end{array}\right )\in \R^4\ ,$$ we observe that 
\begin{equation}\label{X}
\dot X+AX=Q(X)\ ,
\end{equation}
where 
\begin{eqnarray*}
A&:=&\left ( \begin{array}{cccc} 2\alpha & 0&0&-2 \\ 0&0&0&-2\\ 0&0&\alpha &2M\\ -M^2&-M^2&-2M &\alpha \end{array}\right )\ ,\\
 Q(X)&:=&\left (\begin{array}{c} 0\\ 0\\ (\beta +3\delta )\zeta_I\\
-(\beta +3\delta )\zeta_R-2M\delta ^2-4M\beta \delta +\delta ^3+3\beta \delta ^2  \end{array}\right )\ .
\end{eqnarray*}
An elementary calculation --- involving for instance $A-\alpha I$--- shows  that the four eigenvalues of $A$ are
$$\alpha \pm a\ ,\ \alpha \pm i\sqrt{a^2-\alpha ^2}\ .$$
Now recall that we have proved
$$|X(t)|\leq C_\e \, {\rm e}^{-(a+\alpha -\e)t}\ ,\ t\ge 0\ ,$$
hence, in view of the spectrum of $A$, since $Q(X)$ is a quadratic--cubic expression of $X$, we infer
$$|{\rm e}^{tA}[Q(X(t))] |\leq D_\e \, {\rm e}^{-(a+\alpha -2\e)t}\ ,\ t\ge 0\ .$$
From \eqref{X}, we have
$$\frac{d}{dt}[{\rm e}^{tA}X(t)]={\rm e}^{tA}Q[X(t)]\ .$$
Consequently there exists $X_\infty \in \R^4$ such that ${\rm e}^{tA}X(t)\to X_\infty$ as $t\to +\infty $. Integrating from $t$ to $+\infty$, we infer
\begin{equation}\label{Xint}
X(t)={\rm e}^{-tA}X_\infty -\int_t^\infty {\rm e}^{(s-t)A}[Q(X(s))]\, ds\ .
\end{equation}
In view of the spectrum of $A$, we have, for $s\ge t$,
$$\vert {\rm e}^{(s-t)A}[Q(X(s))]\vert \leq E_\e {\rm e}^{(a+\alpha)(s-t)-2(a+\alpha -\e)s}\ ,$$
so that
$$\left \vert \int_t^\infty {\rm e}^{(s-t)A}[Q(X(s))]\, ds \right \vert \leq F_\e {\rm e}^{-2(a+\alpha -\e)t}\ .$$
This implies in particular
$${\rm e}^{-tA}X_\infty=O\left ( {\rm e}^{-(a+\alpha)t}  \right )$$
for every $\e >0$, which, in view of the spectrum of $A$, imposes that $X_\infty$ is an eigenvector of $A$ for the eigenvalue $\lambda =\alpha +a$, hence there exists $\beta_\infty \in \R$ such that
$$X_\infty =\beta_\infty \left (\begin{array}{c} 1\\  \\ {\displaystyle \frac{a-\alpha}{a+\alpha}} \\ \\ {\displaystyle M\frac{\alpha -a}{a}}\\  \\ {\displaystyle \frac{\alpha -a}2}  \end{array}\right )\ .$$
Since $\beta (t)>0$, this imposes in particular $\beta _\infty>0$, so that, improving the remainder estimates by coming back to equation \eqref{Xint}, we conclude, using again \eqref{magie},
\begin{eqnarray*}
\beta (t)&=&\beta_\infty \, {\rm e}^{-(a+\alpha)t}\left (1+O\left ({\rm e}^{-(a+\alpha )t }\right )\right )\ ,\\
\delta (t)&=&\frac{a-\alpha}{a+\alpha}\beta_\infty \, {\rm e}^{-(a+\alpha)t}\left (1+O\left ({\rm e}^{-(a+\alpha )t }\right )\right )\ ,\\
\zeta (t)&=&\left (\frac{\sqrt{a^2-\alpha^2}+i(\alpha -a)}{2}-M\right )\beta_\infty \, {\rm e}^{-(a+\alpha)t}\left (1+O\left ({\rm e}^{-(a+\alpha )t }\right )\right )\ .
\end{eqnarray*}
Conversely, given any $\beta_\infty >0$, one can easily solve equation \eqref{Xint} by a fixed point argument on some interval $[T,\infty [$ for $T>0$ large enough, with the norm
$$\Vert X\Vert_T:=\sup_{t\ge T}{\rm e}^{(a+\alpha)t}|X(t)|\ .$$
 Then the extension to the whole real line is ensured by, say, the identities
$$|\zeta |^2=(M-\delta )^2\beta \delta \ ,\ \delta (t)+2\alpha \int_t^\infty \beta (s)\, ds=\beta (t)$$
which, combined with the first equation,  lead to
$$ | \dot \beta |=O(\beta )\ .$$
Finally, let us prove the last statement. From the fixed point argument mentioned above, it is easy to check that there exists $K>0$ such that, if $|X_\infty |$ is small enough, then 
$$|X(0)|\leq K|X_\infty |.$$
By contradiction, this proves that, if $\beta (0)\geq C^{-1}$, then $\beta _\infty \geq (C')^{-1}$.
The proof of the other inequality is slightly more delicate. Assume $\beta (0)\leq C$. 
As a first step, we are going to prove that $\beta (t)\to 0 $ as $t\to +\infty$ uniformly.
Indeed, since
\begin{equation}\label{bd}
\beta (t)=\delta (t)+2\alpha \int_t^\infty \beta (s)\, ds\ ,
\end{equation}
we infer
$$\frac{d}{dt}\left ({\rm e}^{2\alpha t}\int_t^\infty \beta (s)\, ds\right )=-\delta (t){\rm e}^{2\alpha t}\leq 0\ .$$
Hence
$$\int_t^\infty \beta (s)\, ds\leq \left (\int_0^\infty \beta (s)\, ds\right )\, {\rm e}^{-2\alpha t}\leq \frac{\beta(0)}{2\alpha}{\rm e}^{-2\alpha t}\ .$$
On the other hand, since $\beta (t)+M-\delta (t)$ is a decreasing function of $t$, we have
$$\beta (t)+M-\delta (t)\leq \beta (0)+M-\delta (0)\ ,$$
so that $\beta (t)\leq C+M\ .$ Coming back to the equation, we infer that there exists $B=B(\alpha, M, C)$ such that
$$|\dot \beta (t)|\leq B\ .$$
Consequently, for every $s\in \left [t, t+\frac{\beta (t)}{B}\right ]$, we have
$$\beta (s)\geq \beta (t)-B(s-t)$$
and therefore
$$\int_t^{t+\frac{\beta (t)}{B}}\beta (s)\, ds\geq \frac{\beta (t)^2}{B}-B\int_t^{t+\frac{\beta (t)}{B}}(s-t)\, ds=\frac{\beta (t)^2}{2B}\ .$$
In view of the estimate on $\int_t^\infty \beta $, we conclude
$$\beta (t)\leq K{\rm e}^{-\alpha t}$$
with $K=K(\alpha, M, C),$ which implies the uniform convergence of $\beta (t)$ to $0$. \\
Then, using Remark \ref{uniform} on uniformity in Lemma \ref{necessary}, we infer that, for every $\e >0$,  there exists $T=T(\alpha, M, C, \e)$ such that
$$\forall t\ge T\ ,\  \frac{a-\alpha-\e}{a+\alpha -\e} \leq \frac{\delta (t)}{\beta (t)}\leq \frac{a-\alpha+\e}{a+\alpha+\e }\ .$$
Coming back to identity \eqref{bd}, we infer, for every $t\ge T$,
$$\frac{d}{dt}\left ({\rm e}^{(a+\alpha -\e) t}\int_t^\infty \beta (s)\, ds\right )\leq 0\ ,$$
so that
$$\int_t^\infty \beta (s)\, ds\leq \left (\int_T^\infty \beta (s)\, ds\right )\, {\rm e}^{-(a +\alpha -\e )(t-T)}\leq \frac{\beta(0)}{2\alpha}{\rm e}^{-(a+\alpha -\e )(t-T)}\ .$$
Since
$$\beta (t)=\delta (t)+2\alpha \int_t^\infty \beta \leq \frac{a-\alpha+\e}{a+\alpha+\e }\beta (t)+2\alpha \int_t^\infty \beta \ ,$$
we finally obtain
$$\forall t\ge 0\ ,\ \beta (t)\leq K'\, {\rm e}^{-(a+\alpha -\e )t}$$
with $K'=K'(\alpha, M, C,\e )$.
We have the same estimate for $|X(t)|$, and, coming back to the identity
$$X_\infty =X(0)+\int_0^\infty {\rm e}^{sA}[Q(X(s)]\, ds\ ,$$
we conclude, by choosing $ \e $ small enough, that $$|X_\infty |\leq C'(\alpha, M, C).$$
\end{proof}
Notice that Lemma \ref{bdz} combined with the estimate
$$|c(t)-c_\infty {\rm e}^{-itM}|=O\left (\int_t^\infty\beta (s)\, ds \right )$$
leads to
$${\rm dist}(u(t), \mathcal C_M)=O\left ({\rm e}^{-\lambda t }\right )\ ,\ \lambda =\frac{a+\alpha}{2}\ .$$
Finally, in order to describe the geometric structure of $\Sigma_{M,\alpha}$, we give a complete description of the asymptotic properties of $u(t)$ as $t\to +\infty$.
\begin{lemma}\label{bcp}
Under the assumptions of Lemma \ref{bdz}, there exist $(\beta_\infty, \theta, \varphi )\in (0,\infty)\times \T\times \T $ such that, as $t\to +\infty$,
\begin{eqnarray*}
b(t)&\sim &  \sqrt{\beta_\infty}\, {\rm e}^{-\frac{a+\alpha}{2}t-itM\left (1+\frac{\alpha }{a}\right )+i\varphi}\ ,  \\
c(t)&\sim & \sqrt M {\rm e}^{-itM+i\theta}\ ,\\
p(t)&\sim & \sqrt{\frac{\beta_\infty}{M}}\left (\frac{\sqrt{a^2-\alpha ^2}-i(\alpha -a)}{2M}-1\right )\, {\rm e}^{-\frac{a+\alpha}{2}t+itM\frac{\alpha }{a}+i(\theta -\varphi)}\ .
\end{eqnarray*}
Conversely, for every $(\beta_\infty, \theta, \varphi )\in (0,\infty)\times \T\times \T $, there exists a unique trajectory 
$$u(t,x)=b(t)+\frac{c(t){\rm e}^{ix}}{1-p(t){\rm e}^{ix}}$$
satisfying the above asymptotic properties.
\end{lemma}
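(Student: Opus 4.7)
\emph{Proof proposal.} My plan is to reduce the statement to results already at hand. The moduli part is immediate: Lemma \ref{bdz} delivers the asymptotic of $\beta =|b|^2$ and $\delta =M|p|^2$, hence of $|b|$ and $|p|$, and Lemma \ref{necessary} identifies the full asymptotic $c(t)\sim \sqrt M\,{\rm e}^{-itM+i\theta }$ for some $\theta \in \T$. So I only need to determine the phases of $b(t)$ and $p(t)$.

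For $b$, I would divide the first equation in \eqref{system} by $b$, using $Mc\overline p/b=\zeta /\beta $, to obtain
\begin{equation*}
i\,\frac{\dot b}{b}=-i\alpha +\beta +2(M-\delta )+\frac{\zeta }{\beta }\ .
\end{equation*}
By Lemma \ref{bdz}, $\zeta (t)/\beta (t)\to \tfrac{\sqrt{a^2-\alpha ^2}+i(\alpha -a)}{2}-M$ with remainder $O({\rm e}^{-(a+\alpha )t})$, and similarly $\beta ,\delta =O({\rm e}^{-(a+\alpha )t})$. Invoking identity \eqref{magie} simplifies the imaginary part of the limit to $-M(1+\alpha /a)$, while the real part matches the expected modulus decay $-(a+\alpha )/2$. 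Since the error is integrable, the quantity $\arg b(t)+tM(1+\alpha /a)$ converges to some $\varphi \in \T$, which together with the modulus gives the claimed asymptotic of $b$.

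For $p$, no further computation is required: Lemma \ref{necessary} provides the limit of $\sqrt M\,{\rm e}^{-itM}\overline{p(t)}/b(t)$, so substituting the asymptotic of $b(t)$ and conjugating yields the formula for $p(t)$; the consistency of moduli follows from $\bigl|\tfrac{\sqrt{a^2-\alpha ^2}-i(\alpha -a)}{2M}-1\bigr|^2=\tfrac{a-\alpha }{a+\alpha }$, again a consequence of \eqref{magie}. For the converse, given $(\beta _\infty ,\theta ,\varphi )$ I would use the converse half of Lemma \ref{bdz} to produce the unique $(\beta ,\delta ,\zeta )$ with parameter $\beta _\infty $, and then lift it to $\mathcal W$ by a fixed point argument on $[T,+\infty )$ for \eqref{system} in the ansatz $b(t)=\sqrt{\beta _\infty }\,{\rm e}^{-\frac{a+\alpha }{2}t-itM(1+\alpha /a)+i\varphi }(1+B(t))$, $c(t)=\sqrt M\,{\rm e}^{-itM+i\theta }(1+C(t))$, and analogously for $p(t)$, with $B,C,P$ small in a norm weighted by ${\rm e}^{(a+\alpha )t}$; the system contracts for $T$ large, and the resulting trajectory extends to $\R $ via the global flow on $\mathcal W$.

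The hard part is really the second paragraph: without the $O({\rm e}^{-(a+\alpha )t})$ remainders supplied by Lemma \ref{bdz} --- crucially, \emph{integrable} in $t$ --- I could only recover the leading linear behaviour of $\arg b$, not its convergence modulo $2\pi \Z$ to the constant $\varphi $. Extracting this exponential control is what promotes a rough pointwise estimate on $\dot b/b$ into the sharp phase information the statement demands.
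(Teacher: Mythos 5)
Your argument is essentially the paper's. For the forward direction you integrate $\dot b/b$ directly; the paper instead works with the unimodular quantity $b/\sqrt{\beta}$, whose ODE has an automatically real coefficient (so the modulus decay drops out and only the phase remains), but this is a cosmetic streamlining of the same computation. One slip in wording: the real and imaginary parts you quote, $-(a+\alpha)/2$ and $-M(1+\alpha/a)$, are those of $\dot b/b$, not of the displayed $i\,\dot b/b$ --- harmless, since the integration is clearly what you intend. Recovering the phase of $p$ from Lemma~\ref{necessary} together with the asymptotics of $b$ is exactly the paper's mechanism.

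For the converse you propose a new fixed point for $(b,c,p)$ in an exponentially weighted norm, whereas the paper avoids a second fixed-point argument altogether: it takes the $(\beta,\delta,\zeta)$ produced by Lemma~\ref{bdz}, solves the $(b,c,p)$ Cauchy problem from a large time $T$ with compatible initial data, invokes the already-proved forward direction to read off the angles $(\theta_1,\varphi_1)$ of that particular solution, and then rotates by the gauge and translation invariances to realise the prescribed $(\theta,\varphi)$. Your route gives existence just as well, but be careful with uniqueness: contraction only gives uniqueness within the class of solutions lying in your weighted ball, while the lemma asserts uniqueness among all trajectories satisfying the bare asymptotic equivalences $\sim$. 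To close this, argue as the paper does: any such trajectory has $(\beta,\delta,\zeta)$ determined by the uniqueness in Lemma~\ref{bdz}, and two trajectories with the same reduced variables and the same limiting phases must coincide because the differences $\xi_1:=(b-\tilde b)/\sqrt{\beta}$ and $\xi_2:=\sqrt{M}(p-\tilde p)/\sqrt{\delta}$ satisfy a Gronwall inequality with exponentially small kernel.
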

\begin{proof}
The asymptotic property has already been established for $c(t)$. Let us prove it for $b(t)$ and $p(t)$.
Recall  from system \eqref{system} that
\begin{eqnarray*}
i(\dot b+\alpha b)&=&(|b|^2+2M(1-|p|^2))b +Mc\overline p=\left (\beta +2M-2\delta +\frac{\zeta}{\beta}\right )b\ ,\\
i\dot p&=&M(1-|p|^2)p+c\overline b=\left (M-\delta +\frac{\zeta}{\delta}\right )p\ .
\end{eqnarray*}
Furthermore, we know that $\beta, \delta, \zeta $ satisfy the properties of Lemma \ref{bdz}. Therefore
\begin{eqnarray*}
i\frac{d}{dt}\left (\frac{b}{\sqrt\beta}\right )&=&\left (\beta +2M-2\delta +\frac{{\rm Re}\zeta}{\beta}\right )\frac{b}{\sqrt \beta}\\
&=&\left (M\frac{a+\alpha}{a}+O\left ({\rm e}^{-(a+\alpha)t}\right )\right )\frac{b}{\sqrt \beta}\ ,\\
i\frac{d}{dt}\left (\frac{\sqrt M p}{\sqrt \delta}\right )&=&\left ( M-\delta +\frac{{\rm Re}\zeta}{\delta}\right )\frac{\sqrt M p}{\sqrt \delta}\\
&=&\left (-\frac{M\alpha }{a} +O\left ({\rm e}^{-(a+\alpha)t}\right )\right )\frac{\sqrt M p}{\sqrt \delta}\ .
\end{eqnarray*}
This implies that there exist angles $\varphi, \psi$ such that
\begin{eqnarray*}
b(t)&\sim &  \sqrt{\beta_\infty}\, {\rm e}^{-\frac{a+\alpha}{2}t-itM\left (1+\frac{\alpha }{a}\right )+i\varphi}\\
p(t)&\sim & \sqrt{\frac{\beta_\infty}{M}}\left (\frac{a-\alpha}{a+\alpha}\right )^{\frac 12}\, {\rm e}^{-\frac{a+\alpha}{2}t+itM\frac{\alpha }{a}+i\psi }
\end{eqnarray*}
In view of Lemma \ref{necessary}, 
$${\rm e}^{-itM}\sqrt M\, \frac{\overline {p(t)}}{b(t)}\to {\rm e}^{-i\theta }\,
\left (\frac{\sqrt{a^2-\alpha ^2}+i(\alpha -a)}{2M}-1\right )$$
and of the elementary formula
$$\left (\frac{a-\alpha}{a+\alpha}\right )^{\frac 12}=\left | \frac{\sqrt{a^2-\alpha ^2}-i(\alpha -a)}{2M}-1    \right |\ ,$$
we infer that the asymptotic formula for $p(t)$ reads in fact
$$p(t)\sim  \sqrt{\frac{\beta_\infty}{M}}\left (\frac{\sqrt{a^2-\alpha ^2}-i(\alpha -a)}{2M}-1\right )\, {\rm e}^{-\frac{a+\alpha}{2}t+itM\frac{\alpha }{a}+i(\theta -\varphi)}\ .$$
Conversely, let us prove that, given any $(\beta_\infty ,\theta ,\varphi)\in (0,\infty)\times \T\times \T $, there exists a unique trajectory with these asymptotic properties. By Lemma \ref{bdz}, there exists a unique trajectory $(\beta ,\delta ,\zeta )$ of the reduced system such that
\begin{eqnarray*}
\beta (t)&=&\beta_\infty \, {\rm e}^{-(a+\alpha)t}\left (1+O\left ({\rm e}^{-(a+\alpha )t }\right )\right )\ ,\\
\delta (t)&=&\frac{a-\alpha}{a+\alpha}\beta_\infty \, {\rm e}^{-(a+\alpha)t}\left (1+O\left ({\rm e}^{-(a+\alpha )t }\right )\right )\ ,\\
\zeta (t)&=&\left (\frac{\sqrt{a^2-\alpha^2}+i(\alpha -a)}{2}-M\right )\beta_\infty \, {\rm e}^{-(a+\alpha)t}\left (1+O\left ({\rm e}^{-(a+\alpha )t }\right )\right )\ .
\end{eqnarray*}
Note that ${\rm e}^{2\alpha t}[|\zeta |^2-(M-\delta )^2\beta \delta ]$ is a constant wich tends to $0$ as $t\to +\infty$, hence it is identically $0$. This implies $\beta >0, 0\leq \delta $. Fix $T>0$ big enough so that 
$M>\delta (T)>0, \zeta (T)\ne 0$.   Consider the solution $(b_1,c_1,p_1)$ of the system in $(b,c,p)$ with
$$b_1(T)=\sqrt{\beta(T)}\ ,\  \sqrt M\, p_1(T)=\sqrt{\delta (T)}\ ,\  M\, c_1(T)=\frac{\zeta (T)}{\overline{b_1(T)p_1(T)}}\ .$$
Then, by uniqueness of the Cauchy problem for the reduced system, we have
$$\forall t\in \R\ ,\ |b_1(t)|^2=\beta (t)\ ,\ M\, |p_1(t)|^2=\delta (t)\ ,\ M\, c_1(t)\overline {b_1(t)p_1(t)}=\zeta (t)\ .$$
Applying the first part of Lemma \ref{bcp}, there exists $(\theta_1,\varphi_1)\in \T \times \T$ such that
as $t\to +\infty$,
\begin{eqnarray*}
b_1(t)&\sim &  \sqrt{\beta_\infty}\, {\rm e}^{-\frac{a+\alpha}{2}t-itM\left (1+\frac{\alpha }{a}\right )+i\varphi_1}\ ,  \\
c_1(t)&\sim & \sqrt M {\rm e}^{-itM+i\theta_1}\ ,\\
p_1(t)&\sim & \sqrt{\frac{\beta_\infty}{M}}\left (\frac{\sqrt{a^2-\alpha ^2}-i(\alpha -a)}{2M}-1\right )\, {\rm e}^{-\frac{a+\alpha}{2}t+itM\frac{\alpha }{a}+i(\theta _1-\varphi_1)}\ .
\end{eqnarray*}
Then 
$$b(t):={\rm e}^{i(\varphi -\varphi_1)}b_1(t)\ ,\ c(t):={\rm e}^{i(\theta -\theta_1)}c_1(t)\ ,\ p(t):={\rm e}^{i(\theta -\varphi -\theta_1+\varphi_1)}p_1(t)$$
satisfies the system in $b,c,p$ with the required asymptotic properties.

Finally, let us prove the uniqueness of such a solution. If $\tilde b, \tilde c, \tilde \zeta $ is a solution of the same system with the same asymptotic properties, we first observe that, in view of the uniqueness in Lemma \ref{bdz}, 
$$  \forall t\in \R\ ,\ |\tilde b(t)|^2=\beta (t)\ ,\ M\, |\tilde p(t)|^2=\delta (t)\ ,\ M\, \tilde c(t)\overline {\tilde b(t)\tilde p(t)}=\zeta (t)    \ .$$
Then we come back to the equations on $b/\sqrt{\beta }$ and $\sqrt Mp/\sqrt{\delta }$ that we derived in the beginning of this proof. We obtain
\begin{eqnarray*}
i\frac{d}{dt}\left (\frac{b-\tilde b}{\sqrt\beta}\right )
&=&\left (M\frac{a+\alpha}{a}+O\left ({\rm e}^{-(a+\alpha)t}\right )\right )\frac{b-\tilde b}{\sqrt \beta}\ ,\\
i\frac{d}{dt}\left (\frac{\sqrt M (p-\tilde p)}{\sqrt \delta}\right )&=&\left (-\frac{M\alpha }{a} +O\left ({\rm e}^{-(a+\alpha)t}\right )\right )\frac{\sqrt M (p-\tilde p)}{\sqrt \delta}\ .
\end{eqnarray*}
This implies that  $\xi _1:=(b-\tilde b)/\sqrt{\beta }$ and $\xi _2:=\sqrt M(p-\tilde p)/\sqrt{\delta }$ satisfy the inequality
$$|\xi (t)|\leq \int_t^\infty O\left ({\rm e}^{-(a+\alpha)s}\right )|\xi (s)|\, ds$$
and consequently that $\xi (t)\equiv 0$ for $t$ large enough. Hence $b=\tilde b, p=\tilde p$, and finally
$c=\tilde c$ from the definition of $\zeta $.\end{proof}
In order to complete the proof of Theorem \ref{Wbis}, we  consider the mapping
$$J:(0,\infty)\times \T \times \T \longrightarrow \mathcal E_M$$
defined by 
$$J(\beta_\infty ,\theta ,\varphi )=u(0)\ ,$$
where $u$ is the unique solution of \eqref{DS} provided by the second statement of Lemma \ref{bcp}. 
In view of Lemma \ref{bcp}, the range of $J$ is precisely the set $\Sigma_{M,\alpha}$. In order to prove that this set is a submanifold of dimension $3$ of $\mathcal E_M$, it is enough to establish that $J$ is a one to one proper immersion. 

The injectivity of $J$ is trivial. Its smoothness with respect to $\beta_\infty$ is a consequence of the fixed point argument in Lemma \ref{bdz} ; the dependence with respect to $(\theta,\varphi )$ is much more elementary, since it reflects the gauge and translation invariances, hence it is smooth as well. 

To prove the immersion property, we just have to check that, for every $(\beta_\infty ,\theta ,\varphi )$, the  three vectors $$\partial_{\beta_\infty}J(\beta_\infty ,\theta ,\varphi ), \partial_\theta J(\beta_\infty ,\theta ,\varphi ), \partial_\varphi J(\beta_\infty ,\theta ,\varphi )$$
are independent. We claim that the subspace spanned by these three vectors is also spanned by $\partial_tu(0), -i\partial_xu(0), iu(0)$. Indeed, in view of Lemma \ref{bcp} and of the invariances of equation \eqref{DS}, one easily checks the following identity,
\begin{eqnarray*}
&&{\rm e}^{i\varphi}S_\alpha(t+T)\left [ J(\beta_\infty, \theta_0, \varphi_0)\right ](x+\theta -\varphi))=\\
&&S_\alpha(t)\left [ J\left (\beta_\infty{\rm e}^{-(a+\alpha)T}, \theta_0+\theta -MT, \varphi_0+\varphi -MT\left (1+\frac{\alpha}{a}\right )\right )\right ](x)\ .
\end{eqnarray*}
 If these three vectors were dependent, this would mean that $u$ is a traveling wave of equation \eqref{DS}. This would impose that $(u\vert 1)\equiv 0$, hence $u\in \mathcal C_M$, which is impossible since $\Sigma_{M,\alpha}$ is disjoint from $ \mathcal C_M$. 
 
 Finally, $J$ is proper because of the last statement of Lemma \ref{bdz}.
\s
The last statement to be proved is that $\Sigma_{M,\alpha}\cup \mathcal C_M$ is closed. 
Since $\mathcal C_M$ is compact, it is enough to prove that the closure of
$\Sigma_{M,\alpha}$ is contained into  $\Sigma_{M,\alpha}\cup \mathcal C_M$. Let $(u_n)$ be a sequence of points of $\Sigma_{M,\alpha}$ which tends to $u\in \mathcal W$.
Set $(\beta_n,\theta_n,\varphi_n):=J^{-1}(u_n)$. Since $J$ is a homeomorphism onto its range $\Sigma_{M,\alpha}$, the only cases to be studied are $\beta_n\to 0$ and $\beta_n\to +\infty $. Now we appeal to the last statement of Lemma \ref{bdz}. In the first case, we obtain that $(u\vert 1)=0$, and more generally that $(S_\alpha (t)(u)\vert 1)=0$, so that $u\in \mathcal C_M$. In the second case, we infer $|(u_n\vert 1)|\to \infty $, which 
contradicts the fact that $u_n$ is convergent.
\s
The proof of Theorem \ref{Wbis} is  complete.

\appendix 
\section{Numerical simulations\\ in collaboration with C. Klein\\ (Universit\'e de Bourgogne)}
As mentioned in the introduction, the complete study made in section 4 suggests 
that the open subset $\Omega $ of initial data which give rise to 
exploding orbits is a dense subset. Furthermore, it is natural to ask 
about the rate of the Sobolev norms for such trajectories. For 
instance, is it true that the square of the $H^1$ norm grows linearly 
for generic initial data, as in the case of exploding trajectories in 
$\mathcal W$? The numerical simulations below suggest that these two 
questions have a positive answer.    

To numerically study the damped Szeg\"o equation (\ref{DS}), we 
approximate $u$ by a trigonometric polynomial, 
\begin{equation}
    u(x)\approx  \sum_{k=-N/2+1}^{N/2}\hat{u}_{k}e^{ikx},\quad N\in 
    2\mathbb{N}
    \label{ufft},
\end{equation}
i.e., we consider the \emph{discrete Fourier transform} (DFT) of a vector $u$ (in 
an abuse of notation, we use the same symbol for the function $u$ and 
its discrete approximation) with components $u_{n}=u(x_{n})$, where 
$x_{n}=-\pi+2\pi n/N$, $n=1,2,\ldots,N$. The DFT can be 
computed efficiently with a \emph{Fast Fourier transform}. Note that we work in the 
Hardy space, thus all coefficients $\hat{u}_{k}$ corresponding to 
negative wave numbers vanish. But for computing the DFT, the negative 
wave numbers are nonetheless important. The action of the projector $\Pi$ is 
simply to put the coefficients of the negative wave numbers  
equal to zero.

The approximation of the function $u$ via a DFT 
implies that equation (\ref{DS}) is approximated via a finite 
dimensional system of ODEs. The latter is integrated with the standard 
explicit fourth order Runge-Kutta method.  Derivatives with respect to $x$ are computed in 
standard way by multiplying the coefficients $\hat{u}_{k}$ by $ik$. 
We apply a \emph{Krasny filter}, i.e., we put Fourier 
coefficients with a modulus smaller than $10^{-12}$ equal to 0 to 
address that we work with finite precision, and to take care of 
unavoidable rounding errors.

To test the code, we first study an  element of $\mathcal W$, namely
$$u_0(x)=\frac{{\rm e}^{ix}}{1-p{\rm e}^{ix}}$$
with $p=0.5$ and $\alpha =1$, for which we validate the result of Theorem \ref{Wbis}, namely, as $t\to +\infty $, 
\begin{equation}\label{H1}
\Vert S_\alpha (t)u_0\Vert _{H^1}^2\sim \frac{4\alpha M^3}{\alpha ^2+M^2} t\ .
\end{equation}
Notice that $$\Vert u_0\Vert_{L^2}^2=\frac{1}{1-|p|^2}<\frac{1}{(1-|p|^2)^2}=M(u_0)\ .$$
We use $N_{t}=10^{5}$ time steps for $t\in[0,20]$ and $N = 2^{12}$. 
During the computation we observe relative conservation of the 
momentum to the order of $10^{-15}$, i.e., essentially machine 
precision, and the Fourier coefficients decrease to the order of the 
Krasny filter. This means the solution is well resolved both in space 
and in time. As can be seen in Fig.~\ref{figszegoetest}, the 
agreement between theoretical prediction and numerics is excellent, 
the asymptotic regime is reached for comparatively small values of 
$t$. 
\begin{figure}[htb!]
  \includegraphics[width=0.7\textwidth]{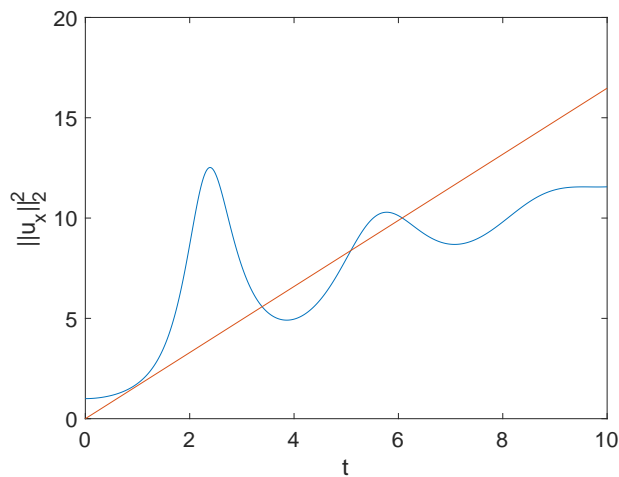}
 \caption{In blue, the norm $||S_\alpha(t)u_0||_{H_1}^2$ for the solution to 
 the equation (\ref{DS}) with $\alpha=1$ for the initial data 
 $u_0(x)=\frac{{\rm e}^{ix}}{1-p{\rm e}^{ix}}$ with $p=0.5$ in 
 dependence of time.   In red, the asymptotic relation (A.2) }
 \label{figszegoetest}
\end{figure}

The second example studies the case of an initial datum with two poles,
$$u_0(x)=\frac{{\rm e}^{ix}}{1-p_1{\rm e}^{ix}}+\frac{{\rm e}^{ix}}{1-p_2{\rm e}^{ix}},$$
which corresponds to a more complicated phase space than $\mathcal 
W$, but still finite dimensional, since the rank of $K_{u_0}$ is $2$. 
We put $p_{1}=0.7$, $p_{2}=0.8$ and use the same numerical parameters 
as before. The relative conservation of the momentum is of the order 
of $10^{-7}$. The norm $||S_\alpha(t)u_0||_{H_1}^2$ in dependence of 
time for this example can be seen in Fig.~\ref{figszegoetest2}. The 
norm appears to grow linearly in time. 
\begin{figure}[htb!]
  \includegraphics[width=0.7\textwidth]{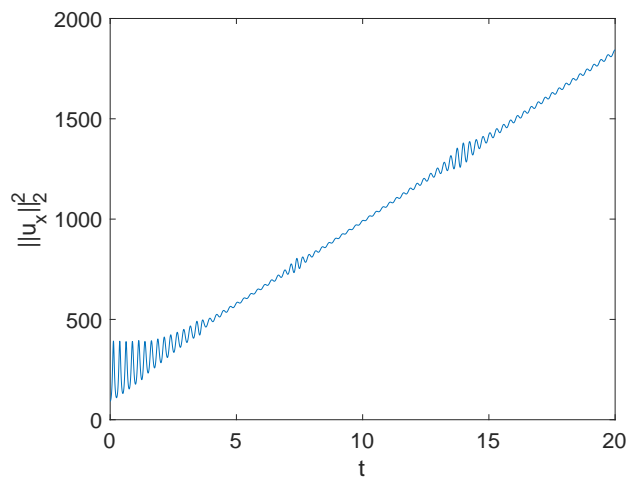}
 \caption{The norm $||S_\alpha(t)u_0||_{H_1}^2$ for the solution to 
 the equation (\ref{DS}) with $\alpha=1$ for the initial data 
 $u_0(x)=\frac{{\rm e}^{ix}}{1-0.7{\rm e}^{ix}}+\frac{{\rm 
 e}^{ix}}{1-0.8{\rm e}^{ix}}$ in 
 dependence of time.}
 \label{figszegoetest2}
\end{figure}

The third example illustrates the case of an arbitrary initial datum 
with a Gaussian profile, $u_{0}=\Pi\exp(-10x^{2})$. We use the same 
numerical parameters, this time for $t\in[0,1000]$. The momentum is 
conserved to the order of $10^{-13}$.  The norm 
$||S_\alpha(t)u_0||_{H_1}^2$ can be seen for this case in 
Fig.~\ref{figszegoetest3}. Once more the square of the $H^1$ norm 
grows linearly in time.
\begin{figure}[htb!]
  \includegraphics[width=0.7\textwidth]{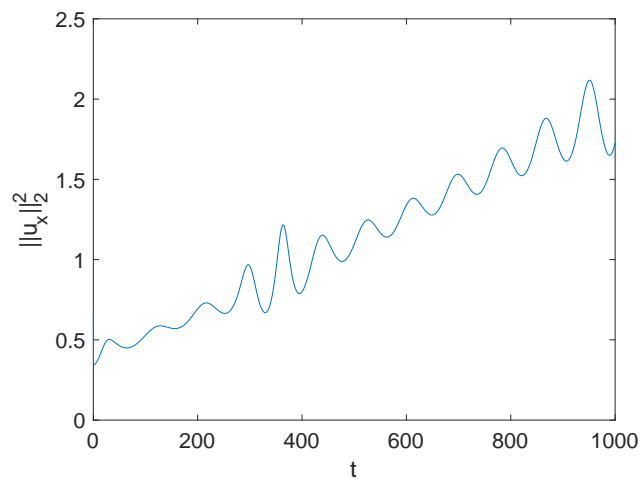}
 \caption{The norm $||S_\alpha(t)u_0||_{H_1}^2$ for the solution to 
 the equation (\ref{DS}) with $\alpha=1$ for the initial data 
 $u_0(x)=\Pi\exp(-10x^{2})$ in 
 dependence of time.}
 \label{figszegoetest3}
\end{figure}

\eject


\begin{thebibliography}{CKST}

\bibitem{Bo1} Bourgain, J., {\em Problems in Hamiltonian PDEs}, Geom. Funct. Anal. (2000), Special Volume, Part I, 32--56.

\bibitem{Bo2} Bourgain, J., {\em 
On the growth in time of higher Sobolev norms of smooth solutions of Hamiltonian PDE.}
Internat. Math. Res. Notices 1996, no. 6, 277--304. 

 \bibitem{CG} Carles, R.,   Gallagher, I., {\em Universal dynamics for the defocusing logarithmic Schr\"odinger equation }, Duke Math. J. 167, (2018), 1761--1801.

\bibitem{CKSTT} Colliander, J.; Keel, M.; Staffilani, G.; Takaoka, H.; Tao, T. {\em Transfer of energy to high frequencies in the cubic defocusing nonlinear Schr\"odinger equation. } Invent. Math. 181 (2010), no. 1, 39--113.

\bibitem{GGens} G\'erard, P., Grellier, S., {\em The cubic Szeg\H{o} equation}, Ann. Scient. \'Ec. Norm. Sup. {\bf 43} (2010), 761--810.

\bibitem{GGinv} G\'erard, P., Grellier, S., {\em Invariant tori for the cubic Szeg\H{o} equation}, Invent. Math. {\bf 187} (2012), 707--754.

\bibitem{GGAst} G\'erard, P., Grellier, S., {\em The cubic Szeg\H{o} equation and Hankel operators}, Ast\'erisque {\bf 389}, 2017.

\bibitem{GGAnPDE} G\'erard, P., Grellier, S., {\em Effective integrable dynamics for a certain nonlinear wave equation}, Anal. PDEs 5 (2012), 1139--1155.

\bibitem{Gfields} G\'erard, P., {\em Wave turbulence and complete integrability}, 
Nonlinear Dispersive Partial Differential Equations and Inverse Scattering, Fields Institute Communications 83, Miller, Perry, Saut, Sulem eds, Springer, to appear.

\bibitem{GGchine}  G\'erard, P., Grellier, S., {\em A survey of the Szeg\H{o} equation}, 
Science China Mathematics, {\bf 62}, (2019) 1087--1100.

\bibitem{GP} G\'erard, P., Pushnitski, A., {\em  Weighted model spaces and Schmidt subspaces of Hankel operators }, J. London Math. Soc. doi: 10.1112/jlms.12270.

\bibitem{GLPR} G\'erard, P., Lenzmann, E., Pocovnicu, O.  Raphael, P. {\em A two-soliton with transient turbulent regime for the cubic half-wave equation on the real line}, Annals of PDE.  (2018) Springer, 4(7) doi:10.1007/s40818-017-0043-7.

\bibitem{Gu}  Guardia, M., {\em  Growth of Sobolev norms in the cubic nonlinear Schr\"odinger
equation with a convolution potential.} Comm. Math. Phys., 329, (2014), 405--434.

\bibitem{GHHMP} Guardia, M., Hani, Z.,  Haus, E., Maspero, A., Procesi, M., 
{\em Strong nonlinear instability and growth of Sobolev norms near quasiperiodic finite-gap tori for the 2D cubic NLS equation}, 45 pages, Preprint available at {\tt arXiv:1810.03694 [math.AP].}

\bibitem{GHP} Guardia, M., Haus, E., Procesi, M., {\em Growth of Sobolev norms for the defocusing analytic NLS on $\T ^2$  }, Adv. Math.,  301, (2016), 615--692.

\bibitem{GuK} Guardia, M.,  Kaloshin V.,{\em Growth of Sobolev norms in the cubic defocusing nonlinear Schr\"odinger equation},  J. Eur. Math. Soc. (JEMS) 17 (2015), no. 1, 71--149.

\bibitem{H} Hani, Z., {\em Long-time strong instability and unbounded orbits for some periodic nonlinear Schr\"odinger equations}, , Archives for Rational Mechanics and Analysis (ARMA) 211 (2014), no. 3, 929--964. 


\bibitem{HPTV} Hani, Z., Pausader, B., Tzvetkov, N., Visciglia, N., {\em Modified scattering for the cubic Schr\"odinger equations on product spaces and applications}, Forum of Mathematics, Pi. Volume 3  (2015)  doi:10.1017/fmp.2015.5.

\bibitem{HP} Haus, E., Procesi, M., {\em  Growth of Sobolev norms for the quintic NLS on $\T ^2$}, Analysis and PDEs 8 (2015), 883--922.



\bibitem{P} Peller, V.V., {\em Hankel Operators and their applications},  Springer Monographs in Mathematics.
 Springer-Verlag, New York, 2003.
 
 \bibitem{Po} Pocovnicu, O. {\em Explicit formula for the solution of the Szeg\H{o} equation on the real line and applications,} Discrete Cont. Dyn. Syst. 31. (2011), 607--649.



\bibitem{T} Thirouin, J., {\em Optimal bounds for the growth of Sobolev norms of solutions of a quadratic Szeg\H{o} equation }, Trans. Amer. Math. Soc. 371 (2019), 3673--3690. 

\bibitem{X} Xu, H. {\em Large time blow up for a perturbation of the cubic Szeg\H{o} equation,} Anal.PDE. 7 (2014), 717--731.

\bibitem{X2} Xu, H., {\em Unbounded Sobolev trajectories and modified scattering theory for a wave guide nonlinear Schr\"odinger equation}.Math. Z. 286, (2017), 443--489.

\end{thebibliography}
\end{document}